\newenvironment{proof}{{\bf Proof}:\ }%
{~\ \hfill $\Box$\vspace{0,5cm}}
\newtheorem{prop}{Property}[section]
\newtheorem{theorem}{Theorem}[section]
\newtheorem{lemma}[theorem]{Lemma}
\newtheorem{coro}[theorem]{Corollary}
\numberwithin{equation}{section}
\begin{document}

\title{The Minimum Dominating Set problem is polynomial for $(claw,P_8)$-free graphs}
\author{
Valentin Bouquet\footnotemark[1] \footnotemark[2]
\and
Christophe\ Picouleau  \footnotemark[2]
}
\date{\today}

\def\thefootnote{\fnsymbol{footnote}}

\footnotetext[1]{ \noindent
Corresponding author: {\tt valentin.bouquet@cnam.fr}}

\footnotetext[2]{ \noindent
Conservatoire National des Arts et M\'etiers, CEDRIC laboratory, Paris (France). Email: {\tt
valentin.bouquet@cnam.fr,chp@cnam.fr}
}

\graphicspath{{.}{graphics/}}

%\begin{document}
\maketitle
\begin{abstract}

We prove that the Minimum Dominating Set problem is polynomial for the class of $(claw,P_8)$-free graphs.

 \vspace{0.2cm}
\noindent{\textbf{Keywords}\/}: Minimum Dominating Set, polynomial time, claw-free graph, $P_k$-free graph.
 \end{abstract}

%\newpage
\parindent=0cm
%----------------------------------------------------------------------------------------------------------------------------------
\section{Introduction}
M. Yannakakis and F. Gavril \cite{Yannakakis} showed in 1980 that the Minimum Dominating Set problem restricted to claw-free graphs is $NP$-complete. Then in 1984, A. Bertossi \cite{Bertossi} showed that the Minimum Dominating Set problem is also $NP$-complete for split graphs, a subclass of $P_5$-free graphs. More recently, in 2016, D. Malyshev \cite{K14P5} proved that the Minimum Dominating Set problem is polynomial for $(K_{1,4},P_5)$-free graphs, hence for $(claw,P_5)$-free graphs. To our knowledge, the complexity of the Minimum Dominating Set problem is unknown for $(claw,P_k)$-free graphs for every fixed $k\ge 6$. We show that the Minimum Dominating Set problem is polynomial for $(claw,P_8)$-free graphs.

\subsection*{Definitions and notations}
We are only concerned with simple undirected graphs $G=(V,E)$. The reader is referred to \cite{Bondy} for definitions and notations in graph theory. For $v\in V$,   $N(v)$ denotes its neighborhood and $N[v]=N(v)\cup\{v\}$ its closed neighborhood. A vertex $v$ is {\it universal} if $N[v]=V$. For $v\in V$ and $A\subseteq V,$ we denote by $N_A(v)=N(v)\cap A$ ($N_A[v]=(N(v)\cap A)\cup\{v\}$) its (closed) neighborhood in $A$. For $X\subseteq V$, $A\subseteq V,$ we denote $N_A(X)=\bigcup_{x\in X} N_A(x)$ and $N_A[X]=N_A(X)\cup X$.

The \emph{contraction} of an edge $uv\in E$ removes the vertices $u$ and $v$ from $V$, and replaces them by a new vertex that is adjacent to the previous neighbors of $u$ and $v$ (neither introducing self-loops nor multiple edges). The graph obtained from $G$ after the contraction of $uv$ is denoted by $G/uv$.

For $S\subseteq V$, let $G[S]$ denote the subgraph of $G$ {\it induced} by $S$, which has vertex set~$S$ and edge set $\{uv\in E\; |\; u,v\in S\}$. For $v\in V$, we write $G-v=G[V\setminus \{v\}]$ and for a subset $V'\subseteq V$ we write $G-V'=G[V\setminus V']$. For a fixed graph $H$ we write $H\subseteq_i G$ whenever $G$ contains an induced subgraph isomorphic to $H$.
For a set $\{H_1,\ldots,H_p\}$ of graphs, $G$ is {\it $(H_1,\ldots,H_p)$-free} if $G$ has no induced subgraph isomorphic to a graph in $\{H_1,\ldots,H_p\}$; if $p=1$ we may write $H_1$-free instead of $(H_1)$-free. For two disjoint induced subgraphs $G[A],G[B]$ of $G$, $G[A]$ is {\it complete} to $G[B]$ if $ab\in E$ for every $a\in A,b\in B$, $G[A]$ is {\it anticomplete} to $G[B]$ if $ab\not\in E$ for every $a\in A,b\in B$.

For $k\geq 1$, $P_k=u_1-u_2-\cdots-u_k$ is the {\it cordless path} on $k$ vertices, that is, $V({P_k})=\{u_1,\ldots,u_k\}$ and $E(P_k)=\{u_iu_{i+1}\; |\; 1\leq i\leq k-1\}$.
For $k\geq 3$, $C_k=u_1-u_2-\cdots-u_k-u_1$ is the {\it cordless cycle} on $k$ vertices, that is, $V({C_k})=\{u_1,\ldots,u_k\}$ and $E({C_k})=\{u_iu_{i+1}\; |\; 1\leq i\leq k-1\}\cup \{u_ku_1\}$. For $k\ge 4$, $C_k$ is called a {\it hole}. A graph without a hole is {\it chordal}.

A set $S\subseteq V$ is called a {\it stable set} or an {\it independent set} if $G[S]$ does not contain any edge. The maximum cardinality of an independent set in $G$ is denoted by $\alpha(G)$. A set $S\subseteq V$ is called a {\it clique} if $G[V]$ is a {\it complete graph}, i.e., every pairwise distinct vertices $u,v\in S$ are adjacent. The graph $C_3=K_3$ is a {\it triangle}. $K_{1,p}$ is the star on $p+1$ vertices, that is, the graph with vertices $u,v_1,v_2\ldots,v_p$ and edges $uv_1,uv_2,\cdots,uv_p$. The {\it claw} is $K_{1,3}$.

A set $S\subseteq V$ is a {\it dominating set} if every vertex $v\in V$ is either an element of $S$ or is adjacent to an element of $S$. The minimum cardinality of a dominating set in $G$ is denoted by $\gamma(G)$ and called the {\it domination number} of $G$. A dominating set $S$ with $\vert S\vert=\gamma(G)$ is called a {\it minimum dominating set}. Following \cite{DomBook} a minimum dominating set is also called a $\gamma$-set. We denote $V^+\subseteq V$ the subset of vertices $v$ of $G$ such that $\gamma(G-v)>\gamma(G)$. If $S\subset V$ is both a dominating and an independent set then $S$ is an {\it independent dominating set}. The minimum cardinality of an independent dominating set in $G$ is denoted by $i(G)$. Clearly we have $\gamma(G)\le i(G)\le \alpha(G)$. Note that a minimum independent dominating set is a {\it minimum maximal independent set}.

\subsection*{Previous results}
We give some results of the literature concerning the Minimum Dominating Set problem that will be useful in the following.
D. Bauer et al. showed in \cite{DomAlter} that for every non-isolated vertex $v$, if  $v\in V^+$ then $v$ is in every $\gamma$-set of $G$. Allan et al. \cite{IndDom} proved that $\gamma(G)=i(G)$ holds for every claw-free graph.
Yannakakis et al. \cite{Yannakakis} proved that the Minimum Dominating Set problem restricted to claw-free graphs is $NP$-complete.  D. Malyshev \cite{K14P5} proved that the Minimum Dominating Set problem is polynomial for $(K_{1,4},P_5)$-free graphs hence for $(claw,P_5)$-free graphs. As Farber \cite{IndDomChordal} proved, a minimum independent dominating set can be determined in linear-time over the class of chordal graphs, the Minimum Dominating Set problem restricted to claw-free chordal graphs is polynomial.

\subsection*{Organization}
The next section give some algorithmic properties. Two properties will allow us to make some simplifications on the graphs $G$ that we consider. Two others will help us to conclude that computing $\gamma(G)$ is polynomial when $G$ have a specific structure relatively to a fixed size subgraph. Then we consider the case where the graph $G$ has a long cycle. From there, we show our main result, starting from $(claw,P_6)$-free graphs and finishing with $(claw,P_8)$-free graphs. We conclude by some open questions regarding $(claw,P_k)$-free graphs for $k\ge 9$.

\section{Algorithmic Properties}
We give two properties that authorize us to make some assumptions and simplifications for the graphs we consider.
\begin{prop}\label{contract}
Let $G$ be a graph. If $u,v$ are two vertices such that $N[u]=N[v]$ then $\gamma(G)=\gamma(G/uv)$. \end{prop}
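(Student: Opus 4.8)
The plan is to show two inequalities, $\gamma(G)\le\gamma(G/uv)$ and $\gamma(G/uv)\le\gamma(G)$, by transferring dominating sets back and forth between the two graphs. Throughout, let $w$ denote the new vertex of $G/uv$ obtained by contracting $uv$, so that $V(G/uv)=(V(G)\setminus\{u,v\})\cup\{w\}$ and $N_{G/uv}(w)=(N_G(u)\cup N_G(v))\setminus\{u,v\}=N_G(u)\setminus\{v\}=N_G(v)\setminus\{u\}$, the last equalities using the hypothesis $N[u]=N[v]$ (which in particular forces $uv\in E$, since $u\in N[v]=N[u]$ would otherwise fail, and gives $N(u)\setminus\{v\}=N(v)\setminus\{u\}$). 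I would record this common neighborhood as a set $C$, so $N_G(u)=C\cup\{v\}$, $N_G(v)=C\cup\{u\}$, and $N_{G/uv}(w)=C$; all other closed neighborhoods are unchanged by the contraction.

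First I would prove $\gamma(G/uv)\le\gamma(G)$. Take a minimum dominating set $D$ of $G$. If $D$ contains neither $u$ nor $v$, then $D\subseteq V(G/uv)$ and I claim it still dominates $G/uv$: every vertex of $V(G)\setminus\{u,v\}$ keeps its closed neighborhood, and $w$ is dominated because some $d\in D$ dominated $u$ in $G$, i.e. $d\in N_G[u]=\{u,v\}\cup C$; since $d\notin\{u,v\}$ we get $d\in C=N_{G/uv}(w)$. If $D$ contains $u$ or $v$ (or both), let $D'=(D\setminus\{u,v\})\cup\{w\}$, which has $|D'|\le|D|$. Then $w$ dominates itself, a vertex of $V(G)\setminus\{u,v\}$ that was dominated in $G$ by $u$ or by $v$ lies in $C\subseteq N_{G/uv}(w)$ hence is dominated by $w$, and every other vertex retains a dominator in $D\setminus\{u,v\}\subseteq D'$. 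In all cases $\gamma(G/uv)\le|D|=\gamma(G)$.

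For the reverse inequality $\gamma(G)\le\gamma(G/uv)$, take a minimum dominating set $D^\star$ of $G/uv$. If $w\notin D^\star$, then $D^\star\subseteq V(G)$ and it dominates $G$: vertices outside $\{u,v\}$ are fine, and both $u$ and $v$ are dominated because the dominator of $w$ in $D^\star$ lies in $C=N_G(u)\cap N_G(v)$. If $w\in D^\star$, set $D=(D^\star\setminus\{w\})\cup\{u\}$, of the same size; then $u$ dominates itself and $v$ (they are adjacent), dominates every former neighbor of $w$ since $C=N_G(u)\setminus\{v\}$, and the rest of $D^\star$ handles all remaining vertices. Hence $\gamma(G)\le|D^\star|=\gamma(G/uv)$, and combining the two bounds gives $\gamma(G)=\gamma(G/uv)$.

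There is no real obstacle here; the only point that needs a moment's care is the bookkeeping about which vertex of a dominating set is responsible for dominating $u$, $v$, or $w$, and the observation that $N[u]=N[v]$ forces $uv\in E$ and makes the common neighborhood $C$ behave symmetrically. Everything else is a routine case analysis on whether the contracted vertex (or one of its preimages) belongs to the dominating set under consideration.
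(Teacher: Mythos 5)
Your proof is correct and follows essentially the same route as the paper's: transfer dominating sets between $G$ and $G/uv$, with a case analysis on whether the contracted vertex (or one of $u,v$) lies in the set. You simply carry out the domination checks in more explicit detail than the paper does.
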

\begin{proof}
Let $u'$ be the vertex of $G/uv$ resulting from the contraction of $uv$. Let $\Gamma$ be a $\gamma$-set of $G$. At most one of $u$ and $v$ is in $\Gamma$. If $u\in \Gamma$ then let $\Gamma'=(\Gamma\setminus\{u\})\cup\{u'\}$. If $u,v\not\in\Gamma$ then let $\Gamma'=\Gamma$. In the two cases $\Gamma'$  is a dominating set of $G/uv$, so $\gamma(G)=\vert \Gamma'\vert\ge \gamma(G/uv)$.
Now suppose that $\gamma(G)> \gamma(G/uv)$. Let $\Gamma'$ be a $\gamma$-set of $G/uv$. If $u'\in \Gamma'$ then $(\Gamma'\setminus\{u'\})\cup\{u\}$ is a dominating set of $G$ such that $\vert (\Gamma'\setminus\{u'\})\cup\{u\}\vert=\gamma(G/uv)<\gamma(G)$, a contradiction. If $u'\not\in \Gamma'$ then $\Gamma'$ is a dominating set of $G$, a contradiction. Hence $\gamma(G)=\gamma(G/uv)$.
\end{proof}

\begin{prop}\label{leaf}
Let $G=(V,E)$ be a connected $claw$-free graph with $uv\in E$ such that $u$ is a leaf.  There exists $\Gamma$ a minimum dominating set of $G$ that consists of $\Gamma=\{v\}\cup\Gamma'$ where $\Gamma'$ is a minimum dominating set of $G'=G-N[v]$.
\end{prop}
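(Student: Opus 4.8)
The plan is to normalize a $\gamma$-set so that it contains $v$ but not $u$, and then to push all of its remaining elements out of $N[v]$ using claw-freeness. Since $u$ is a leaf, $N[u]=\{u,v\}$, so every dominating set meets $\{u,v\}$; and since $N[u]\subseteq N[v]$, replacing $u$ by $v$ turns any $\gamma$-set into a $\gamma$-set containing $v$, after which $u$ cannot also belong to it (otherwise deleting $u$ would yield a strictly smaller dominating set). Hence I may start from a $\gamma$-set $\Gamma_0$ with $v\in\Gamma_0$ and $u\notin\Gamma_0$, and write $\Gamma_0=\{v\}\cup W\cup R$ with $W=\Gamma_0\cap N(v)$ and $R=\Gamma_0\setminus N[v]$.

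The key structural fact — and the one place where claw-freeness is used — is that for every $w\in N(v)$ the set $N(w)\setminus N[v]$ is a clique. Indeed, if $y_1,y_2\in N(w)\setminus N[v]$ were distinct and non-adjacent, then $\{v,y_1,y_2\}$ would be a stable set (the $y_i$ are not adjacent to $v$, being outside $N[v]$), and since $w$ is adjacent to $v,y_1,y_2$, the set $\{w,v,y_1,y_2\}$ would induce a claw centered at $w$, a contradiction. (By the same argument applied at $v$ together with the isolated vertex $u$, the set $N(v)\setminus\{u\}$ is itself a clique, which is the reason the manipulation of $W$ below goes through cleanly.)

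Next I would use this to replace each $w\in W$ by at most one vertex lying outside $N[v]$. Let $D$ be the set of vertices not dominated by $\{v\}\cup R$. Each vertex of $D$ lies outside $N[v]$, and, being dominated by $\Gamma_0$, must be adjacent to some $w\in W$; thus $D\subseteq\bigcup_{w\in W}\bigl(N(w)\setminus N[v]\bigr)$. For every $w\in W$ with $D\cap N(w)\neq\emptyset$, choose one $y_w\in D\cap N(w)$: since $D\cap N(w)$ is contained in the clique $N(w)\setminus N[v]$, the single vertex $y_w$ dominates all of $D\cap N(w)$. Letting $F$ be the set of chosen vertices $y_w$, the set $\Gamma^{*}=\{v\}\cup R\cup F$ dominates $V$, has size at most $1+|R|+|W|=|\Gamma_0|=\gamma(G)$, hence is a $\gamma$-set, and moreover $\Gamma^{*}\cap N(v)=\emptyset$ while $v\in\Gamma^{*}$.

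Finally, writing $\Gamma^{*}=\{v\}\cup\Gamma'$ with $\Gamma'=R\cup F\subseteq V\setminus N[v]=V(G')$, it remains to verify that $\Gamma'$ is a minimum dominating set of $G'=G-N[v]$. It dominates $G'$: any vertex of $G'$ is outside $N[v]$, so it is not dominated by $v$, hence it is dominated by $\Gamma'$, and the relevant edges lie inside $G'$. It is minimum: a dominating set $\Gamma''$ of $G'$ with $|\Gamma''|<|\Gamma'|$ would make $\{v\}\cup\Gamma''$ a dominating set of $G$ of size below $\gamma(G)$. The only genuine obstacle is the third step — ensuring that the elements of $W$ can be compensated one-for-one by vertices outside $N[v]$ — and this is exactly what the claw-free clique fact provides; everything else is bookkeeping about which vertex dominates which.
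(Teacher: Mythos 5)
Your proof is correct and follows essentially the same route as the paper: normalize a $\gamma$-set to contain $v$, use the claw-free fact that $N(w)\setminus N[v]$ is a clique for each $w\in N(v)$ to trade the members of the dominating set lying in $N(v)$ for vertices outside $N[v]$, and then split off $\Gamma'=\Gamma\setminus\{v\}$ as a minimum dominating set of $G-N[v]$. If anything your bookkeeping is slightly more careful than the paper's, since you only introduce a replacement vertex when some vertex actually needs it, which sidesteps the paper's parenthetical worry about $N(w)\setminus N[v]$ being empty.
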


\begin{proof}
Since $u$ is a leaf there exists $\Gamma$ a minimum dominating set of $G$ with $v\in\Gamma$. Let $w\in N(v)\setminus \{u\}$. Since $G$ is claw-free then $N(w)\setminus N[v]$ is a clique. We can assume that $w\not\in \Gamma$, otherwise replacing $w$ by $w'\in N(w)\setminus N[v]$ we have another $\gamma$-set of $G$ (note that if $N(w)\setminus N[v]$ is empty then $\Gamma$ cannot be a minimum dominating set). We show that $\Gamma'=\Gamma\setminus N[v]$ is a minimum dominating set of $G'=G-N[v]$. Clearly $\Gamma'$ dominates $G'$. If there exists $S$ a $\gamma$-set of $G'$ such that $\vert S\vert <\vert \Gamma'\vert$ then $S\cup\{v\}$ is a  dominating set of $G$ with $\vert S\cup\{v\}\vert<\Gamma$, a contradiction.
\end{proof}

As a consequence if a minimum dominating set of $G'=G-N[v]$ can be determined in polynomial time then  a minimum dominating set of $G$ can be determined in polynomial time.\\

We show two conditions on the structure of $G$ that authorize us to directly conclude that computing a $\gamma$-set for $G$ can be done in polynomial time.

\begin{prop}\label{V=N[T]}
Let $k>0$ be a fixed positive integer and $G=(V,E)$ be a graph. If there exists $T \subset V$ of size $\vert T\vert\leq k$ such that $V=N[T]$ then computing a minimum dominating set for $G$ is polynomial.
\end{prop}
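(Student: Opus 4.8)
The plan is to observe that the hypothesis instantly bounds the domination number. Since the set $T$ is itself a dominating set of $G$ (every vertex lies in $N[T]$) and $|T|\le k$, we get $\gamma(G)\le k$. Because $k$ is a fixed constant, a minimum dominating set has constant size, so we can afford to find one by exhaustive search.

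Concretely, I would enumerate every subset $S\subseteq V$ with $|S|\le k$. There are $\sum_{i=0}^{k}\binom{n}{i}=O(n^k)$ such subsets, where $n=|V|$. For each candidate $S$, test whether it is a dominating set by checking, for every $v\in V$, whether $v\in N[S]$; this costs $O(n^2)$ time (or $O(n+|E|)$ using adjacency lists). Among all candidates that pass the test, return one of smallest cardinality. At least one candidate passes, since the hypothesis guarantees the existence of a dominating set of size at most $k$, and therefore the smallest candidate that passes is exactly a $\gamma$-set. The total running time is $O(n^{k+2})$, which is polynomial because $k$ is fixed and does not depend on the input.

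A point worth making explicit is that the algorithm never needs to know or compute $T$: the set $T$ serves only as a certificate that the search space $\{S\subseteq V : |S|\le k\}$ contains a dominating set, hence contains a minimum dominating set. (If one insisted on producing such a $T$, the same brute-force enumeration would find it, but this is not required for the statement.) There is no genuine obstacle in the argument; the only thing to be careful about is precisely that $k$ is a constant fixed in advance, so that the bound $O(n^k)$ on the number of candidates is a true polynomial rather than something that grows with the input — which is why the proposition is phrased with $k$ fixed. This proposition will be used later as a stopping rule: whenever the structural analysis produces a bounded-size set $T$ dominating all of $V$, we may halt and solve the instance directly.
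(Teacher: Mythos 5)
Your proposal is correct and is exactly the paper's argument: the hypothesis gives $\gamma(G)\le k$, and a brute-force search over all $O(n^k)$ subsets of size at most $k$ then finds a $\gamma$-set in polynomial time since $k$ is a fixed constant. You merely spell out the enumeration and the verification cost, which the paper leaves implicit.
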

\begin{proof}
We have $\gamma(G) \leq k$. So a minimum dominating set can be computed in $O(n^k)$.
\end{proof}

\begin{prop}\label{V=N[T]+W}
Let $k, k'>0$ two fixed positive integers and $G=(V,E)$ be a graph. If there exists $T \subset V$ of size $\vert T\vert\leq k$ such that $W=V\setminus N[T]$ has a size $\vert W\vert\leq k'$ then computing a minimum dominating set for $G$ is polynomial.
\end{prop}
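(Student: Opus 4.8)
The plan is to reduce this statement directly to Proposition~\ref{V=N[T]}. Since $k$ and $k'$ are fixed constants, so is $k+k'$, and the natural move is to take $T'=T\cup W$ as the small dominating-type set: it has size $\vert T'\vert\le\vert T\vert+\vert W\vert\le k+k'$.

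The only verification needed is that $V=N[T']$, and this is immediate from the definitions. By construction $W=V\setminus N[T]$, so $N[T]\cup W=V$. Since $N[T']=N[T\cup W]=N[T]\cup N[W]\supseteq N[T]\cup W$, we conclude $N[T']=V$. Applying Proposition~\ref{V=N[T]} with the constant $k+k'$ in place of $k$ then yields that a minimum dominating set of $G$ can be computed in time $O(n^{k+k'})$, which is polynomial because $k+k'$ is fixed.

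One point worth stressing is that the algorithm does not need to know $T$ or $W$: the procedure behind Proposition~\ref{V=N[T]} simply enumerates all vertex subsets of size at most $k+k'$ and tests each one for domination. The hypothesis of the present proposition is used purely to certify that $\gamma(G)\le k+k'$ — witnessed by the dominating set $T\cup W$ — so that this bounded brute-force search is guaranteed to return a dominating set, hence a minimum one. Consequently I expect no genuine obstacle here; the statement is an immediate corollary of Proposition~\ref{V=N[T]}, and the proof is essentially the two lines above.
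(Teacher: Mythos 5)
Your proof is correct and is essentially the paper's own argument spelled out in more detail: the paper also observes that $T\cup W$ dominates $G$, so $\gamma(G)\le k+k'$, and then brute-forces over all subsets of size at most $k+k'$ in $O(n^{k+k'})$ time. Your explicit reduction to Proposition~\ref{V=N[T]} and the remark that the algorithm need not know $T$ or $W$ are accurate elaborations, not a different route.
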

\begin{proof}
We have $\gamma(G) \leq k + k'$. So a minimum dominating set can be computed in $O(n^{k+k'})$.
\end{proof}

\section{$G$ has a long cycle}
We give two lemmas that will authorize us to conclude that the Minimum Dominating Set problem is polynomial when $G$, a $(claw,P_k)$-free graph, contains a long induced cycle.

\begin{lemma}\label{Ck}
For every fixed $k\ge 6$, if $G$ is a $(claw,P_k)$-free connected graph such that $C_k\subseteq_i G$, then a minimum dominating set of $G$ can be given in polynomial time.
\end{lemma}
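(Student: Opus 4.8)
The plan is to prove the stronger statement that $V=N[V(C)]$, where $C$ is the given induced $C_k$; since $\vert V(C)\vert=k$ is a fixed constant, Property~\ref{V=N[T]} then finishes the proof. Write $C=u_1-u_2-\cdots-u_k-u_1$, put $A=N(V(C))\setminus V(C)$ and $B=V\setminus N[V(C)]$, so that what must be shown is $B=\emptyset$.

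The heart of the argument is an analysis of $N_C(x):=N(x)\cap V(C)$ for $x\in A$. Since $G$ is claw-free, $G[N_C(x)]$ has independence number at most $2$, and since $C$ is an induced $C_k$ with $k\ge 6$, this forces $N_C(x)$ to be a nonempty union of at most two arcs of $C$ of total length at most $4$; in particular, unless $N_C(x)$ is a single vertex $\{u_i\}$ or a single edge $\{u_i,u_{i+1}\}$, it contains two non-adjacent vertices of $C$. I would rule out the two exceptional cases directly. If $N_C(x)=\{u_i\}$, then $\{x,u_{i-1},u_{i+1}\}$ is a stable set contained in $N(u_i)$ (using $k\ge 6$ to get $u_{i-1}u_{i+1}\notin E$), a claw --- contradiction. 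If $N_C(x)=\{u_i,u_{i+1}\}$, then $x$ together with the path $u_i-u_{i-1}-\cdots-u_{i+2}$ obtained from $C$ by deleting $u_{i+1}$ (indices mod $k$) is an induced $P_k$: the only neighbour of $x$ on that path is $u_i$, and the cycle vertices involved induce a path since $C$ is an induced cycle --- contradiction with $P_k$-freeness. Hence every $x\in A$ has two non-adjacent neighbours $u_i,u_j$ on $C$. It follows that $x$ has no neighbour in $B$: a neighbour $w\in B$ satisfies $w\notin N[V(C)]$, hence $w\not\sim u_i$ and $w\not\sim u_j$, so $\{u_i,u_j,w\}\subseteq N(x)$ would be a claw centred at $x$. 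Since no vertex of $B$ has a neighbour in $V(C)$ either, $N(B)\subseteq B$; as $G$ is connected and $V(C)\ne\emptyset$ is disjoint from $B$, we conclude $B=\emptyset$.

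The main obstacle is the first step: correctly enumerating, via claw-freeness, the sets $N_C(x)$ that can occur, and confirming that the two ``clique'' possibilities $\{u_i\}$ and $\{u_i,u_{i+1}\}$ --- the only ones without a non-adjacent pair --- are precisely the ones excluded, one by exhibiting a claw and the other by ``opening'' the cycle at $u_{i+1}$ to produce an induced $P_k$. Once this dichotomy is in hand, the passage to $B=\emptyset$ and the appeal to Property~\ref{V=N[T]} are routine.
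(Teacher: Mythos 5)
Your proof is correct and follows essentially the same route as the paper: rule out $|N_C(x)|\le 2$ via a claw at $u_i$ and an induced $P_k$ obtained by opening the cycle, deduce that every vertex of $A$ has two non-adjacent neighbours on $C$, conclude $V=N[V(C)]$ by claw-freeness and connectivity, and finish with Property~\ref{V=N[T]}. If anything, you are slightly more explicit than the paper, which asserts $2\le|N_C(v)|\le 4$ without spelling out the claw excluding $|N_C(v)|=1$.
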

\begin{proof}
Let $C_k=v_1-\cdots-v_k-v_1,C_k\subseteq_i G$. Let $v\not\in V(C_k)$ be such that $N(v)\cap V(C_k)\ne \emptyset$. Since $G$ is claw-free and $k\ge 6$, we have $2\le \vert N(v)\cap V(C_k)\vert\le 4$. If $\vert N(v)\cap V(C_k)\vert=2$, the two neighbors of $v$ in $C_k$ must be adjacent, thus there is an induced $P_k$-subgraph that is a contradiction. For $3\le \vert N(v)\cap V(C_k)\vert\le 4$, let $w$ be a neighbor of $v$. If $N(w)\cap V(C_k)=\emptyset$ then there is a claw centered onto $v$, a contradiction. Hence every neighbor of $v$ has a neighbor in $C_k$ and therefore $N[C_k]=V$. So, from Property \ref{V=N[T]} we can compute a $\gamma$-set of $G$ in polynomial time.
\end{proof}

\begin{lemma}\label{Ck-1}
For every fixed $k\ge 6$, if $G$ is a $(claw,P_k,C_k)$-free connected graph such that $C_{k-1}\subseteq_i G$, then a minimum dominating set of $G$ can be given in polynomial time.
\end{lemma}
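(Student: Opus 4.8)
The strategy mirrors the proof of Lemma~\ref{Ck}, but now the "extra" vertices outside the cycle can behave a little more wildly, so I expect to fall back on Property~\ref{V=N[T]+W} rather than Property~\ref{V=N[T]}. Write $C_{k-1}=v_1-\cdots-v_{k-1}-v_1$ with $C_{k-1}\subseteq_i G$. For a vertex $v\notin V(C_{k-1})$ with $N(v)\cap V(C_{k-1})\neq\emptyset$, claw-freeness again forces $\vert N(v)\cap V(C_{k-1})\vert\le 4$, and since the neighbourhood of $v$ on the cycle induces a claw-free graph, it is a union of at most two "arcs" (runs of consecutive vertices) of total length $\le 4$. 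If $v$ has a single neighbour on $C_{k-1}$, or two non-adjacent neighbours, we can splice $v$ into the cycle to obtain an induced $P_k$ (a forbidden subgraph) unless the configuration closes up into an induced $C_k$ — which is also forbidden. So every such $v$ has either two adjacent neighbours, or an arc of three or four consecutive cycle vertices (again, more delicate splicing arguments rule out two separate arcs, each of length $\ge 2$, via a $P_k$ or $C_k$). The upshot of this first step: every vertex with a neighbour on the cycle in fact has at least two \emph{consecutive} neighbours on it.

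**Second step: handle vertices at distance $\ge 2$ from the cycle.** Let $A=N(V(C_{k-1}))\setminus V(C_{k-1})$ and $B=V\setminus N[V(C_{k-1})]$. I want to show $B$ is small (bounded by a constant depending only on $k$), so that $T=V(C_{k-1})$ and $W=B$ satisfy the hypothesis of Property~\ref{V=N[T]+W}. Suppose $b\in B$. Then $b$ has a neighbour $a\in A$, and by claw-freeness $N(a)\setminus N[V(C_{k-1})]$ together with... — more precisely, since $a$ has $\ge 2$ consecutive neighbours on the cycle, say $v_i,v_{i+1}$, and also the neighbour $b$ off the cycle, claw-freeness applied at $a$ to $\{v_i$ (or a non-neighbour of $v_{i+1}$ among the cycle part), $b\}$ constrains things; the key point I would push is that from $b$ one can build an induced path that leaves the cycle, travels through $a$, and continues far enough along $C_{k-1}$ to realise an induced $P_k$ — unless $B$ is very restricted. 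Concretely: take a shortest path from $b$ to the cycle; it has the form $b-a-v_i$ (length exactly $2$, since $b\in B$ means distance $\ge 2$, and distance $>2$ would already be excluded once we know every neighbour of a cycle-neighbour either meets $A\cup V(C_{k-1})$ or creates a claw). Then $v_i-v_{i+1}-\cdots-v_{i+k-2}$ is an induced $P_{k-1}$ on the cycle avoiding $a,b$, and prepending $b-a-v_i$ — after trimming to keep it induced, using that $a$'s neighbourhood on the cycle is an arc of length $\le 4$ — yields an induced $P_k$, contradiction. This forces $B=\emptyset$, and then Property~\ref{V=N[T]} finishes it; if the trimming only gives $\vert B\vert\le c(k)$, Property~\ref{V=N[T]+W} finishes it instead.

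**The main obstacle.** The hard part is the bookkeeping in the splicing arguments of the first two steps: when a vertex $v$ (or $b$, via $a$) attaches to a long induced path obtained by cutting the cycle open, one must check that the resulting walk is actually \emph{induced} and has exactly $k$ vertices, which requires knowing precisely how $v$'s (resp. $a$'s) neighbourhood sits on the cycle — a union of $\le 2$ arcs of total size $\le 4$, with the "two separate arcs" case needing its own sub-case analysis to produce either a $P_k$ or a $C_k$. The parameter $k\ge 6$ is exactly what gives enough room on $C_{k-1}$ (which has $k-1\ge 5$ vertices) to extend a short off-cycle path into a $P_k$ while staying induced. Once every off-cycle vertex is pinned down to sit within distance $1$ of $C_{k-1}$ and $\vert V\setminus N[V(C_{k-1})]\vert$ is bounded by a constant, the algorithmic conclusion is immediate from Property~\ref{V=N[T]} or Property~\ref{V=N[T]+W}.
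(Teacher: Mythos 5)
Your overall strategy --- show that $V=N[V(C_{k-1})]$ (or nearly so) and invoke Property~\ref{V=N[T]} or Property~\ref{V=N[T]+W} --- is exactly the paper's, and the one configuration you analyse completely (a vertex $a$ with exactly two consecutive cycle-neighbours $v_i,v_{i+1}$ and a neighbour $b$ outside $N[V(C_{k-1})]$, giving the induced path $b-a-v_{i+1}-\cdots-v_{i-1}$ on $k$ vertices) is the genuine $P_k$ case. But several of the mechanisms you invoke for the other configurations fail as written. First, splicing a vertex $v$ with a single cycle-neighbour $v_i$ into the opened cycle does not produce an induced $P_k$: using all $k-1$ cycle vertices leaves the chord between $v_i$ and the other endpoint, and trimming that endpoint leaves only $k-1$ vertices; the correct argument is the claw $G[\{v_i,v,v_{i-1},v_{i+1}\}]$ centered at $v_i$. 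Second, a vertex with two non-adjacent cycle-neighbours need not create a $P_k$ or a $C_k$ at all when it has no off-cycle neighbour (e.g.\ a vertex adjacent to $v_1,v_4$ on $C_7$ when $k=8$ yields only an induced $C_5$ and an induced $C_6$, both permitted), so your step-1 classification of \emph{all} of $A$ is not provable by splicing. Third, and most seriously, when $a$ has an arc of $3$ or $4$ consecutive cycle-neighbours and a neighbour $b\in B$, your trimmed path has only $k-1$ or $k-2$ vertices, and the fallback ``then $\vert B\vert\le c(k)$, use Property~\ref{V=N[T]+W}'' has no justification --- nothing bounds $\vert B\vert$ at that point.

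All three problems are repaired by the single observation you gesture at but never actually deploy: if $a$ has a neighbour $b$ with $N(b)\cap V(C_{k-1})=\emptyset$ and two non-adjacent neighbours $v_i,v_j$ on the cycle, then $G[\{a,b,v_i,v_j\}]$ is a claw. Since an arc of three or more consecutive cycle vertices has non-adjacent endpoints (as $k-1\ge 5$), this kills every case except ``exactly two consecutive cycle-neighbours,'' where your $P_k$ splice finishes the job. Hence no vertex of $N(V(C_{k-1}))$ has a neighbour outside $N[V(C_{k-1})]$, so by connectivity $V=N[V(C_{k-1})]$ and Property~\ref{V=N[T]} applies. This is precisely the paper's proof; note that it never needs to classify the cycle-neighbourhoods of vertices that have no neighbour in $B$, which is where your step 1 overreaches and where the unprovable subcases live.
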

\begin{proof}
Let $C_{k-1}=v_1-\cdots-v_{k-1}-v_1$, $C_{k-1}\subseteq_i G$ and $v\not\in V(C_{k-1})$ such that $N(v)\cap V(C_{k-1})\ne \emptyset$. We have $2\le \vert N(v)\cap V(C_{k-1})\vert\le 4$ for $k\ge 7$ and $2\le \vert N(v)\cap V(C_{k-1})\vert\le 5$ for $k=6$.
Let $w$ be a neighbor of $v$ such that $N(w)\cap V(C_{k-1})=\emptyset$.
If $3\le \vert N(v)\cap V(C_{k-1})\vert\le 5$, then there is a claw centered onto $v$, a contradiction. When $\vert N(v)\cap V(C_{k-1})\vert=2$ there is an induced $P_k$-subgraph that is a contradiction.
 So $N[C_{k-1}]=V$ and therefore from Property \ref{V=N[T]} we can compute a $\gamma$-set of $G$ in polynomial time.
\end{proof}

\section{$G$ is $(claw,P_k,C_k, C_{k-1})$-free, $C_{k-2}\subseteq_i G$, $k\le 8$}

In this section we prove that, for $k\le 8$, if $G$ is a $(claw,P_k,C_k, C_{k-1})$-free graph such that $C_{k-2}\subseteq_i G$ then the Minimum Dominating Set problem is polynomial. The first lemma gives a structural property for $G$. We use this property to prove two  other lemmas, the first one for $k= 6$, the second for $7\le k\le 8$.

\begin{lemma}\label{Ck-2W}
For every fixed $k \geq 6$, if $G$ is a $(claw,P_k,C_k, C_{k-1})$-free connected graph such that $C_{k-2}\subseteq_i G$, then $W=V\setminus N[V(C_{k-2})]$ is an independent set.
\end{lemma}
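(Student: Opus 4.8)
The plan is to argue by contradiction: suppose $W = V \setminus N[V(C_{k-2})]$ contains an edge $xy$, and build a long induced path (or a forbidden cycle) to get a contradiction. Write $C_{k-2} = v_1 - v_2 - \cdots - v_{k-2} - v_1$. First I would establish the basic local structure: since $G$ is claw-free and $k-2 \ge 4$, any vertex $v \notin V(C_{k-2})$ with $N(v) \cap V(C_{k-2}) \ne \emptyset$ has between $1$ and $4$ neighbors on the cycle, and when it has exactly $2$ they must be consecutive on $C_{k-2}$ (otherwise two non-adjacent cycle-neighbors together with a cycle vertex and $v$ would give either a claw or, splitting the cycle, a long induced path). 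I also need that $G$ is connected, so there is a path from $\{x,y\}$ to $V(C_{k-2})$.

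Next I would take a shortest path $P$ from $x$ (or $y$) to the cycle, say $P = x - z_1 - z_2 - \cdots - z_t - v_i$ with $z_1, \ldots, z_t \in W$ and $v_i \in V(C_{k-2})$, with $t$ as small as possible over all choices; by definition of $W$ none of $x, y, z_1, \ldots, z_{t-1}$ has a neighbor on $C_{k-2}$, so only $z_t$ attaches to the cycle. The key point is that $z_t$'s neighborhood on $C_{k-2}$ is a set of at most $4$ vertices that is "clustered" (by the claw-freeness analysis above, it is contained in a union of edges, and in fact using $C_{k-1}, C_k$-freeness one shows it sits inside a short arc). Then, walking around $C_{k-2}$ away from $N(z_t) \cap V(C_{k-2})$, I can find an induced sub-path of $C_{k-2}$ on enough vertices which, prepended with $z_t, y$ (or $z_t, z_{t-1}, \ldots$ if $t \ge 1$, using the edge $xy$ to gain one extra vertex), yields an induced $P_k$. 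The crucial gain is the edge $xy$: even if $W$ is "close" to the cycle, having two mutually adjacent vertices in $W$ that are both non-adjacent to the cycle lets us extend any path by one, pushing a $P_{k-1}$-type configuration up to $P_k$.

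Concretely, the cleanest case is $t = 1$: then $x - y$ with $y$ (say) adjacent to $z_1 \in W$, $z_1$ adjacent to some $v_i \in V(C_{k-2})$, and $x, y \notin N[V(C_{k-2})]$. Since $N(z_1) \cap V(C_{k-2})$ is contained in a short arc, say around $v_i$, the path $v_{i+2} - v_{i+3} - \cdots - v_{i-1}$ (indices mod $k-2$, going the long way, avoiding all neighbors of $z_1$) is an induced path on about $k-4$ vertices of the cycle; appending $v_i - z_1 - y - x$ at the front — after checking $x$ has no neighbor on this arc (true since $x \in W$), $y$ has no neighbor on it (true since $y \in W$), $z_1$ is adjacent only to $v_i$ among $\{v_i, v_{i+2}, \ldots\}$ by the arc property, and $x \not\sim z_1$ (shortest path / or else a shorter route) — gives an induced path on $(k-4) + 4 = k$ vertices, i.e. a $P_k$, contradiction. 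The same bookkeeping handles larger $t$, since each extra $z_j$ replaces one cycle vertex but the edge $xy$ compensates; and the endpoint-type cases (where $z_t$ has $3$ or $4$ cycle-neighbors, or the arc wraps) are finite and reduce to the same count.

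The main obstacle I anticipate is making precise the claim that $N(z_t) \cap V(C_{k-2})$ lies inside a short enough arc of $C_{k-2}$ so that the complementary arc is long enough to finish the count — this is where $C_{k-1}$-freeness and $C_k$-freeness (not just claw-freeness) must be used, and where the bound $k \le 8$ (equivalently $k - 2 \le 6$, so the cycle is short and the neighborhood/arc analysis is genuinely finite) is needed. A secondary nuisance is the careful verification of inducedness across the "seam" where the path from $W$ meets the cycle, i.e. ruling out chords from $x, y, z_1, \ldots, z_{t-1}$ to the chosen arc; but all of these follow from $x, y, z_j \in W$ (no neighbors on the cycle at all) plus minimality of the path length, so this part is routine once the arc statement is in hand.
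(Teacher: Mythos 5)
There is a genuine gap, and it sits exactly in the sub-case your sketch glosses over: for $t=1$, the situation where \emph{both} $x$ and $y$ are adjacent to the attachment vertex $z_1$. Your inducedness checklist requires $x\not\sim z_1$, justified by ``shortest path / or else a shorter route'', but $x\sim z_1$ only yields a route of the \emph{same} length from $x$ to the cycle, so nothing forbids it; and when $x\sim z_1$ and $y\sim z_1$ hold simultaneously, whichever of $x,y$ you place at the far end of your path creates a chord to $z_1$, and in fact no induced $P_k$ or forbidden cycle need exist at all. A concrete witness for $k=6$: take $C_4=v_1-v_2-v_3-v_4-v_1$, a vertex $v$ with $N(v)\cap V(C_4)=\{v_1,v_2\}$, and two adjacent vertices $x,y$ whose only other neighbour is $v$. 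This graph is connected and $(claw,P_6,C_6,C_5)$-free, yet $xy$ is an edge inside $W$. So a proof that only hunts for long induced paths or cycles cannot close this case; the lemma is only true after the preprocessing of Properties~\ref{contract} and~\ref{leaf} (contract true twins, delete leaves), which your proposal never invokes.

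The paper's proof is organized around precisely this difficulty. The path-extension you describe is used only to show that two adjacent $w,w'\in W$ must have \emph{identical} neighbourhoods in $N(V(C))\setminus V(C)$ (if $v\sim w$ but $v\not\sim w'$, then $w'-w-v-v_{i+1}-\cdots-v_{i-1}$ is a $P_k$); then, because twins have been contracted, $N[w]\ne N[w']$ forces a third vertex $r\in W$ with $rw\in E$ and $rw'\notin E$; the same path argument applied to the edge $rw$ gives $rv\in E$; and the final contradiction is the \emph{claw} $G[\{v,r,v_i,w'\}]$, not a path or a cycle. Your handling of the remaining cases does match the paper: a vertex of $W$ at distance at least $3$ from the cycle already yields a $P_k$ without using the edge $xy$, and $N_C(z_t)$ reduces to a consecutive pair $\{v_i,v_{i+1}\}$ via a claw on $z_t$, its $W$-neighbour and two nonadjacent cycle vertices (though note the correct range is $2\le|N_C(z_t)|$, since a single cycle neighbour already gives a claw centred on that cycle vertex). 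Finally, your closing remark that the bound $k\le 8$ is needed here is not right: the lemma is stated and proved for every $k\ge 6$, and no step of the argument uses an upper bound on $k$.
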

\begin{proof}
Let $C=C_{k-2}=v_1-\cdots-v_{k-2}-v_1$,  $C\subseteq_i G$ and $v \in N[V(C)]\setminus V(C)$. We have $2 \le \vert N_C(v) \vert \le 5$ (note that $\vert N_C(v) \vert = 5$ only for $C=C_5$). Let $W=V\setminus N[V(C)]$ and let $w \in W$ be a neighbor of $v$.
If $3\le \vert N_C(v)\vert\le 5$, there is a claw, a contradiction. Hence, $v$ is such that $N_C(v)=\{v_i, v_{i+1}\}$, $1 \leq i \leq k-2$ (for convenience, when $i=k-2$, we read $v_{i + 1} = v_1$).
By Property \ref{contract}, we can assume that all contractibles vertices of $G$ are contracted. Moreover, from Property \ref{leaf} we can assume that $G$ has no leaves.

Assume for contradiction that $w$ has a neighbor $w'$, $w' \in W$. When $w'$ has no neighbor in  $N(V(C))$, there is an induced $P_k$-subgraph that is a contradiction. Hence $w'$ has a neighbor in $N(V(C))$.
Recall that $N[w] \neq N[w']$. If $vw' \not\in E$ then there is an induced $P_k$-subgraph, a contradiction. Hence, $w$ and $w'$ have the same neighbors in $N(V(C))$ but not in $W$. So there exists $r \in W$ with $rw \in E,rw' \not\in E$. The arguments above implies $rv\in E$. But $G[\{r, v, v_{i}, w'\}]$ is a claw, a contradiction. Hence, $W=V\setminus N[V(C_{k-2})]$ is independent.
\end{proof}

\begin{lemma}\label{C56-2}
If $G$ is a $(claw,P_6,C_6,C_5)$-free connected graph such that $C_4\subseteq_i G$, then a minimum dominating set of $G$ can be given in polynomial time.
\end{lemma}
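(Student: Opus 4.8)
The plan is to read a tight structural picture off Lemma~\ref{Ck-2W} and then reduce the whole computation to maximum matching. Write $C=C_4=v_1-v_2-v_3-v_4-v_1$, $A:=N(V(C))\setminus V(C)$ and $W:=V\setminus N[V(C)]$; by Lemma~\ref{Ck-2W}, $W$ is independent, and, exactly as in that proof, we first use Properties~\ref{contract} and \ref{leaf} to assume that $G$ has no two vertices with equal closed neighbourhoods and no leaf. A similar claw argument to the one used there shows that every $a\in A$ satisfies $N_C(a)\supseteq\{v_i,v_{i+1}\}$ for some $i$ (a single $C$-neighbour, or a diagonal pair as the whole of $N_C(a)$, would create a claw); since every side of the square contains $v_1$ or $v_3$, this gives the first key fact: the diagonal pair $\{v_1,v_3\}$ (and likewise $\{v_2,v_4\}$) dominates $N[V(C)]=V(C)\cup A$. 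The second key fact is that, $G$ being claw-free, $N(a)\cap W$ is an independent subset of $N(a)$, so $|N(a)\cap W|\le 2$ for every $a\in A$; hence the bipartite ``attachment'' graph between $A$ and $W$ has maximum degree at most $2$ on the $A$-side.

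Next I would sandwich $\gamma(G)$. Let $\mu$ be the minimum size of a set $M\subseteq A\cup W$ dominating $W$ (the neighbours of every $w\in W$ lie in $A$, so such sets exist). For any dominating set $D$ of $G$, $D\cap(A\cup W)$ dominates $W$, whence $\gamma(G)\ge\mu$; and if $M$ realises $\mu$ then $M\cup\{v_1,v_3\}$ dominates $G$, so $\gamma(G)\le\mu+2$. Thus $\gamma(G)\in\{\mu,\mu+1,\mu+2\}$. Moreover $\mu$ is polynomially computable: since each $a\in A$ has at most two neighbours in $W$, putting such an $a$ into $M$ saves at most one element compared with putting its $W$-neighbours into $M$ individually, and a short argument then gives $\mu=|W|-\nu$, where $\nu$ is the size of a maximum matching in the graph on vertex set $W$ in which two vertices are joined whenever they have a common neighbour in $A$. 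So the computation of $\mu$ reduces to maximum matching.

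It then remains to decide which of $\mu,\mu+1,\mu+2$ equals $\gamma(G)$. Since a dominating set of size $\mu$ (resp. $\mu+1$) must meet $A\cup W$ in a set of size $\mu$ that dominates $W$, i.e.\ in a \emph{minimum} $W$-dominating set, one checks: $\gamma(G)=\mu$ iff some minimum $W$-dominating set $M\subseteq A\cup W$ also dominates $V(C)\cup A$; $\gamma(G)\le\mu+1$ iff some minimum $W$-dominating set together with one further vertex dominates $V(C)\cup A$, or some $M\subseteq A\cup W$ with $|M|=\mu+1$ dominates $G$; otherwise $\gamma(G)=\mu+2$. Each of these tests I would turn into a single polynomially solvable feasibility question of the form ``is there a maximum matching in the common-neighbour graph whose chosen $A$-vertices, together with the unmatched part of $W$ (and possibly one extra vertex), cover every vertex of $V(C)\cup A$?'', modelled as an auxiliary matching/flow problem, using that only the at most four sides of $C$ and the bounded local structure around $v_1,\dots,v_4$ constrain the cover.

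The hard part will be this last step: the natural phrasing ``there exists a minimum $W$-dominating set with such-and-such covering property'' quantifies over exponentially many sets, so the crux is to replace it by one polynomial optimisation. The degree-$\le 2$ bound on the $A$-side of the attachment graph and the fact that $N[V(C)]$ is dominated by two vertices are exactly what should make this possible, but carrying out the reduction cleanly — and, in the companion lemma for $k\in\{7,8\}$, doing the analogous bookkeeping with $C_{k-2}$ in place of $C_4$ and the correspondingly larger but still bounded boundary — is the main technical effort. Finally, by Property~\ref{leaf} it is enough to be able to compute such a minimum dominating set of $G$ in polynomial time, which the above provides.
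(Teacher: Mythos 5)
Your set-up is sound (each $a\in A$ sees a consecutive pair of $C$, hence $\{v_1,v_3\}$ dominates $N[V(C)]$, and $\gamma(G)\in\{\mu,\mu+1,\mu+2\}$), but the proof has a genuine gap exactly where you flag it: deciding which of $\mu,\mu+1,\mu+2$ is attained. The test ``does \emph{some} minimum $W$-dominating set also dominate $V(C)\cup A$'' quantifies over exponentially many sets, and you never actually exhibit the promised reduction to a single matching/flow problem; you only assert that the degree bound ``should make this possible''. As written, this is a plan for a proof, not a proof, and it is not clear it can be executed in this generality: different vertices of $W$ could a priori attach to different sides of $C$, and the choice of representative in each $N[w]$ then interacts with covering $A$ in a way that is not obviously a matching problem.

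What closes the gap (and is the route the paper takes) is a stronger structural analysis that you skip. First, since each $a\in A$ has a neighbour on $C$ and $W$ is independent, claw-freeness gives $\vert N(a)\cap W\vert\le 1$ (not merely $\le 2$): two $W$-neighbours of $a$ together with a $C$-neighbour of $a$ form a claw. So your common-neighbour graph on $W$ is empty, $\nu=0$, $\mu=\vert W\vert$, and the closed neighbourhoods $N[w]$, $w\in W$, are pairwise disjoint. Second, and crucially, a short $P_6$/$C_5$ case analysis shows that \emph{all} vertices of $N(W)\cap A$ have the \emph{same} two consecutive $C$-neighbours, say $\{v_1,v_2\}$. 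Hence no choice of one dominator per $w$ can reach $v_3$ or $v_4$, giving $\gamma(G)\ge\vert W\vert+1$, and picking one neighbour of each $w$ plus the single vertex $v_3$ is verified (again by claw/$P_6$ exclusion) to dominate everything, so $\gamma(G)=\vert W\vert+1$ with an explicit polynomial construction. Without this homogeneity lemma your final optimisation step remains unjustified, so the argument is incomplete.
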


\begin{proof}
Let $C=C_4=v_1-\cdots-v_4-v_1$, $C\subseteq_i G$ and $v\not\in V(C)$ such that $N(v)\cap V(C)\ne \emptyset$. We have $2 \le \vert N_C(v) \vert \le 4$. Let $W=V\setminus N[C]$ and $w \in W$ be a neighbor of $v$. If $3\le \vert N_C(v)\vert\le 4$ then $G$ contains a claw, a contradiction. Hence, $N_C(v)=\{v_i, v_{i+1}\}$, $1 \leq i \leq 4$ (for convenience, when $i=4$, we read $v_{i + 1} = v_1$). We  assume that all contractibles vertices of $G$ are contracted and $G$ has no leaves.

By Property \ref{V=N[T]+W}, if $\vert W \vert \leq 1$ then a minimum dominating set can be computed in polynomial time. So we assume that $\vert W \vert \geq 2$ and by Lemma \ref{Ck-2W}, we know that $W$ is an independent set. We show that all vertices $v \in N[W]\setminus W$ have exactly the same neighbors in $C$.

Let $w, w' \in W$, $w \neq w'$, be such that $w$ has a neighbor $v\in N[C]\setminus V(C)$ and $w'$ has a neighbor $v'\in N[C]\setminus V(C)$. Since $G$ is claw-free $v \neq v'$. W.l.o.g. $N_C(v)=\{v_1, v_2\}$.
Assume that $N_C(v) \neq N_C(v')$. W.l.o.g. $N_C(v')=\{v_2, v_3\}$ (note that $N_C(v')=\{v_1, v_4\}$ is symmetric). If $vv' \not\in E$ then $w-v-v_1-v_4-v_3-v'=P_6$, else $v_1-v-v'-v_3-v_4=C_5$, a contradiction. Now it remains $N_C(v')=\{v_3, v_4\}$. We have $vv'\not\in E$ else there is a claw, but $w-v-v_1-v_4-v'-w'=P_6$, a contradiction. Thus, w.l.o.g. every vertex $w\in W$ has only neighbors $v \in N[C]\setminus V(C)$ such that $N(v)=\{v_1, v_2\}$.

Let $\vert W \vert = q$, $q \geq 2$. We show that $\gamma(G)=q+1$.
Since $W$ is independent and for every distinct $w, w' \in W$, we have $N[w] \cap N[w'] = \emptyset$, we must take $q$ vertices of $N[W]$ to dominate the vertices of $W$. This vertices cannot dominate $v_3$ nor $v_4$. Hence $\gamma(G)\ge q+1$.

We construct a $\gamma$-set of $G$ as follows. We set $R$ by taking exactly one neighbor of each $w, w \in W$. Clearly, $\Gamma=R \cup \{v_3\}$ dominates $V(C) \cup N[R]$. Suppose that there exists $s \in N[C]\setminus V(C)$ that is not dominated by $\Gamma$. If $N_C(s)=\{v_1, v_2\}$ then there exists $r \in R$ such that $G[\{r, s, v_1, v_4\}]$ is a claw, a contradiction. If $N_C(s)=\{v_1, v_4\}$ then $w-v-v_2-v_3-v_4-s=P_6$, a contradiction. If $N_C(s)=\{v_1, v_2, v_4\}$ then there exists $r \in R$ such that $G[\{r, s, v_2, v_3\}]$ is a claw, a contradiction. Hence every $s \not\in N[R] \cup V(C)$ is dominated by $v_3$. It follows that $\Gamma$ is a $\gamma$-set of $G$.
Clearly $\Gamma$ can be constructed in polynomial time.\end{proof}

\begin{lemma}\label{C78-2}
For $k\in \{7, 8\}$, if $G$ is a $(claw,P_k,C_k, C_{k-1})$-free connected graph such that $C_{k-2}\subseteq_i G$, then a minimum dominating set of $G$ can be given in polynomial time.
\end{lemma}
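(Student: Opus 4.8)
The strategy parallels the proof of Lemma~\ref{C56-2}, but with a longer induced cycle $C=C_{k-2}$ (so $C_5$ when $k=7$ and $C_6$ when $k=8$). First I would fix $C=v_1-\cdots-v_{k-2}-v_1$ with $C\subseteq_i G$, apply Properties~\ref{contract} and~\ref{leaf} to assume all contractible vertices are contracted and $G$ has no leaves, and set $W=V\setminus N[V(C)]$. By Property~\ref{V=N[T+W]}, if $|W|$ is bounded by a constant we are done, so assume $|W|$ is large. By Lemma~\ref{Ck-2W}, $W$ is an independent set. As in Lemma~\ref{Ck}'s proof, claw-freeness and $k\ge 7$ force every $v\in N[V(C)]\setminus V(C)$ to satisfy $N_C(v)=\{v_i,v_{i+1}\}$ for some $i$ (again $|N_C(v)|=5$ can occur only for $C=C_5$, i.e.\ $k=7$, and is killed by the claw argument).

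The core of the argument is to understand how the vertices of $N[W]\setminus W$ attach to $C$. I would show, as in Lemma~\ref{C56-2}, that all neighbors in $N[V(C)]\setminus V(C)$ of vertices of $W$ have their two consecutive neighbors on $C$ confined to a short arc of $C$: given $w,w'\in W$ with private neighbors $v,v'$ on $C$ (they must be distinct and nonadjacent-or-adjacent depending on overlap of $N_C$), the forbidden subgraphs $P_k$, $C_{k-1}$, $C_k$ rule out configurations where $N_C(v)$ and $N_C(v')$ are ``far apart'' on the cycle. Concretely, if $N_C(v)=\{v_1,v_2\}$ and $N_C(v')=\{v_j,v_{j+1}\}$ with $j$ in the ``middle'' of $C$, then walking $w-v-v_1-v_{k-2}-\cdots$ around the long side of $C$ to $v_{j+1}-v'-w'$ (or $-v'$) produces an induced path or cycle on $k$ or $k-1$ vertices. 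The upshot is that, up to relabeling, there is a fixed-size set $T\subseteq V(C)$ (an arc of $C$ of length bounded independently of $n$) such that every vertex of $N[W]\setminus W$ is adjacent to a vertex of $T$, and moreover every $w\in W$ has all its neighbors attaching only to a single edge, say $v_1v_2$, of $C$.

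Once that structure is in hand, the counting is identical in spirit to Lemma~\ref{C56-2}. Since $W$ is independent and, for distinct $w,w'\in W$, $N[w]\cap N[w']=\emptyset$ (this needs the same private-neighbor / no-twins argument, using that contractible vertices are contracted), dominating $W$ costs $|W|=q$ vertices, none of which can reach the ``far'' vertices of $C$ (e.g.\ $v_{\lceil (k-2)/2\rceil}$), giving $\gamma(G)\ge q+c$ for a constant $c$. For the upper bound I would pick $R$ consisting of one neighbor per $w\in W$, add a constant-size set $D\subseteq V(C)$ dominating all of $V(C)$ (an interval cover of the cycle, $|D|\le \lceil(k-2)/3\rceil$), and check that $R\cup D$ dominates everything: vertices of $V(C)$ and $N[R]$ by construction, and any remaining $s\in N[V(C)]\setminus V(C)$ must attach to the ``$v_1v_2$-side'' — otherwise a claw on some $r\in R$ or a forbidden $P_k$ appears, as in Lemma~\ref{C56-2}. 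This gives $\gamma(G)=q+c$ and a polynomial construction.

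\textbf{Main obstacle.} The delicate point is the second paragraph: with a $6$-cycle (case $k=8$) there are more positions $j$ for $N_C(v')$ to consider, and for each relative position one must exhibit the right forbidden induced subgraph while being careful that the path/cycle constructed is genuinely \emph{induced} (no unexpected chords among $v,v'$ and the cycle vertices, which is where claw-freeness and the bound $|N_C(\cdot)|\le 2$ get used repeatedly). Handling the adjacency $vv'\in E$ versus $vv'\notin E$ cases, and the sub-case where $N_C(v)$ and $N_C(v')$ share exactly one vertex, is the part that requires the most care; everything after the structural reduction is routine counting mirroring Lemma~\ref{C56-2}.
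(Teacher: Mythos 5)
Your overall scaffolding (contract twins, remove leaves, $W$ independent by Lemma~\ref{Ck-2W}, count $\gamma(G)=q+c$) is the right frame, but the central structural claim you announce is wrong, and it is wrong in a way that derails the rest of the plan. You assert that ``every $w\in W$ has all its neighbors attaching only to a single edge, say $v_1v_2$, of $C$'', mirroring Lemma~\ref{C56-2}. For $C_5$ and $C_6$ the truth is the opposite: since twins are contracted and $w$ is not a leaf, $w$ must have two neighbors $v,v'\in S$ with $N_C(v)\neq N_C(v')$ (two neighbors with the same attachment are forced to be adjacent true twins, by a claw analysis on $S$), and then $N_C(v)\cap N_C(v')\neq\emptyset$ yields a $C_{k-1}$ or $C_k$ through $v,v'$ (and possibly $w$). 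So every $w\in W$ attaches to \emph{two disjoint} edges of $C$; comparing two distinct $w,w'\in W$ further forces these edges to be pairwise non-adjacent on $C$. This is what makes the two cases diverge: on $C_5$ no two disjoint non-adjacent edges exist, so $\vert W\vert\le 1$ and Property~\ref{V=N[T]+W} finishes $k=7$ immediately (no counting argument of the form $\gamma=q+c$ is needed or even meaningful there); on $C_6$ the edges must be the opposite pair $\{v_1,v_2\},\{v_4,v_5\}$. Your single-edge picture is not an alternative route — it is a false statement you would be unable to prove, and the path/cycle case analysis you defer to the ``delicate second paragraph'' is precisely the place where you would discover it fails.

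A second gap is in the lower bound for $k=8$. With the correct structure the $q$ dominators of $W$ sit on the two opposite edges of $C_6$ and miss only $v_3$ and $v_6$, so the easy bound is only $\gamma(G)\ge q+1$. To get $\gamma(G)\ge q+2$ (which is the true value, matched by $\{v_1,v_4\}\cup W$) one must separately rule out a single vertex $s$ adjacent to both $v_3$ and $v_6$ completing a dominating set of size $q+1$: each admissible $N_C(s)$ (e.g.\ $\{v_1,v_2,v_3,v_6\}$ or $\{v_1,v_3,v_4,v_6\}$) produces a $P_8$ or a claw. Your sketch contains no trace of this step, and your generic ``none of the $q$ vertices can reach the far vertex $v_{\lceil(k-2)/2\rceil}$, giving $\gamma\ge q+c$'' does not determine $c$ nor exclude a cheap common dominator of the two missed vertices.
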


\begin{proof}
By Properties \ref{contract} and \ref{leaf}, we can assume that all contractibles vertices of $G$ are contracted and that $G$ has no leaves.
Let $C=C_{k-2}=v_1-\cdots-v_{k-2}-v_1, C\subseteq_i G$ and $v \in N[C]\setminus V(C)$. We have $2 \le \vert N_C(v) \vert \le 5$ (note that $\vert N_C(v) \vert = 5$ only for $C=C_5$). Let $S=N[C]\setminus V(C)$, $W=V\setminus N[C]$ and $w \in W$ a neighbor of $v$. If $3\le \vert N_C(v)\vert\le 4$ then $G$ has a claw, a contradiction. Hence, $v$ is such that $N_C(v)=\{v_i, v_{i+1}\}$, $1 \leq i \leq k-2$ (for convenience, when $i=k-2$, we read $v_{i + 1} = v_1$). \\

We show that for every $w \in W,$ there exists $v, v' \in N(w)$ such that $N_C(v) \cap N_C(v') = \emptyset$. Let $w \in W$ and $v, v' \in N_S(w)$, $v \neq v'$.

First, we show that $N_C(v) \neq N_C(v')$. Suppose that $N_C(v)=N_C(v')$, w.l.o.g. $N_C(v)=\{v_1, v_2\}$. We have $vv'\in E$ else $G[\{v, v', v_1,v_{k-2}\}]$ is a claw. Since $N[v]\neq N[v']$ there exists $u \in V$ such that $uv \in E$ and $uv' \not\in E$. If $u \in W$ then by Lemma \ref{Ck-2W} $uw\not\in E$ but $G[\{u, v, w, v_1\}]$ is a claw, a contradiction. So, we have $u \in S$. If $N_C(u)=\{v_1, v_2\}$ then $G[\{u, v', v_2, v_3\}$ is a claw, a contradiction. So $N_C(u) \neq N_C(v)$ and we can assume that $wu\not\in E$, otherwise we have $u,v$ two neighbors of $w$ with distinct neighborhoods in $C$. If $N_C(u) \cap N_C(v) = \emptyset$ then $G[\{u, v, v_1, w\}]$ is a claw, a contradiction. So, w.l.o.g., we assume that $N_C(u) \cap N_C(v) = \{v_1\}$ but $G[\{u, v, v_2, w\}]$ is a claw, a contradiction. Hence $N[v] = N[v']$ and $v, v'$ can be contracted implying that  $w$ is a leaf, a contradiction. Thus for every $w,w \in W,$ there exists $v, v' \in N_S(w)$, $v\neq v'$ such that $N_C(v) \neq N_C(v')$.

Now we show that that $N_C(v) \cap N_C(v') = \emptyset$. W.l.o.g. assume that $N_C(v)=\{v_1, v_2\}$ and $N_C(v')=\{v_2, v_3\}$. If $vv' \in E$ then $v_1-v-v'-v_3-\cdots-v_{k-2}-v_1=C_{k-1}$, else $v_1-v-w-v'-v_3-\cdots-v_{k-2}-v_1=C_k$, a contradiction. Thus every $w \in W,$ has two neighbors $v,v'\in S$ such that $N_C(v) \cap N_C(v') = \emptyset$. \\

It follows from Property \ref{V=N[T]+W} that we can assume that $\vert W \vert \geq 2$. So let $w, w' \in W$ (recall $ww'\notin E$). Since both $w$ and $w'$ have two neighbors in $S$ with non intersecting neighborhoods in $C$, let $v \in N(w)$, $v' \in N(w')$ such that $N_C(v)\cap N_C(v')=\emptyset$. W.l.o.g. $N_C(v)=\{v_1, v_2\}$. Assume that $N_C(v')=\{v_3, v_4\}$ (note that $N(v')=\{v_{k-2}, v_{k-3}\}$ is symmetric). If $vv' \in E$ then $G[\{v, v', v_1, w\}]$ is a claw, else $w-v-v_1-v_{k-2}-\cdots-v_4-v'-w'=P_k$, a contradiction. Hence the two neighborhoods of $N_C(v)$ and $N_C(v')$ are not adjacent. It follows that for $k=7$, since $C_{k-2}=C_5$, such a configuration is impossible. This yields to $\vert W \vert \leq 1$ and by Property \ref{V=N[T]+W} a minimum dominating set can be computed in polynomial time. \\

Now, we focus on the remaining case of $k=8$. Let $\vert W \vert=q,q\ge 2$. We show that $\gamma(G)=q+2$. Since $W$ is independent and that for every distinct vertices $w, w' \in W$, we have $N[w] \cap N[w'] = \emptyset$, we must take $q$ vertices of $N[W]$ to dominate the vertices of $W$. Let $w, w' \in W$. From above we can assume that $w$ has a neighbor $v$ such that $N_C(v)=\{v_1,v_2\}$ and $w'$ has a neighbor $v'$ such that $N_C(v')=\{v_4,v_5\}$ (each vertex of $W$ has two neighbors whose are neighbors of respectively $\{v_1,v_2\}$ and $\{v_4,v_5\}$ since $C=C_6$). $G$ being claw-free we have $vv'\not\in E$. The $q$ vertices that dominates $W$ cannot dominate $v_3$ and $v_6$. Hence $\gamma(G)\ge q+1$.

Suppose that $\gamma(G)= q+1$. The minimum dominating set of $G$ must contain a vertex $s \in S$ a neighbor of both $v_3$ and $v_6$. If $vs\in E$, respectively $v's\in E$, then $G$ has a claw ($s$ cannot be complete to $N_C(v)\cup N_c(v')$), a contradiction. Also, $s$ must have ($v_1$ or $v_5$) and ($v_2$ or $v_4$) as neighbors else there is a claw. We assume first that $N(s)=\{v_1, v_2, v_3, v_6\}$. Then $w-v-v_1-s-v_3-v_4-v'-w'=P_8$ (recall $vv' \not\in E$ since $G$ is claw-free), a contradiction. The case where $N(s)=\{v_3, v_4, v_5, v_6\}$ is symmetric. Now we assume that $N(s)=\{v_1, v_3, v_4, v_6\}$ (note that $N(s)=\{v_2, v_3, v_5, v_6\}$ is symmetric). Then $w-v-v_2-v_3-s-v_6-v_5-v'=P_8$, a contradiction. Hence $\gamma(G)\ge q+2$. \\

We show that $\Gamma=\{v_1, v_4\} \cup W$ is a $\gamma$-set of $G$. Clearly $\Gamma$ dominates $N[W]\cup V(C)$. Let $s\not\in N[W]\cup V(C)$. So $s \in S$. Suppose that $sv_1, sv_4 \not\in E$. From above $ws\not\in E$ and $vs\not\in E$ else $G[\{v,s,v_1,w\}]$ is a claw. If $N(s)=\{v_2, v_3\}$ then $w-v-v_1-v_6-v_5-v_4-v_3-s=P_8$, a contradiction. By symmetry $N(s) \neq \{v_5, v_6\}$. As shown before $N(s)=\{v_2, v_3, v_5, v_6\}$ is not possible.
 Hence every $s\not\in N[W]\cup V(C)$ is dominated by $v_1$ or $v_4$. It follows that $\Gamma=\{v_1, v_4\} \cup W$ is a $\gamma$-set of $G$.
\end{proof}

By Lemmas \ref{Ck}, \ref{Ck-1}, \ref{C56-2}, \ref{C78-2} we immediately obtain the corollary below.
\begin{coro}\label{Ck,k-1,k-2}
Let $G$ a $(claw, P_k)$-free graph, $6 \leq k \leq 8$. If $C_l \subseteq_i G$, $k-2 \leq l \leq k$, then a minimum dominating set of $G$ can be given in polynomial time.
\end{coro}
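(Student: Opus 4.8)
The plan is simply to unwind the corollary into a short case analysis on the longest induced cycle of $G$ among $C_{k-2},C_{k-1},C_k$, feeding each case into the appropriate lemma of this section or the previous one. Since a minimum dominating set of a graph is the disjoint union of minimum dominating sets of its connected components, I would first reduce to the connected case (all four lemmas are stated for connected graphs), so assume $G$ is connected with $C_l\subseteq_i G$ for some $l$ with $k-2\le l\le k$.

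Now branch. If $C_k\subseteq_i G$, then Lemma \ref{Ck} produces a minimum dominating set in polynomial time and we are done. Otherwise $G$ is $C_k$-free; if moreover $C_{k-1}\subseteq_i G$, then $G$ is $(claw,P_k,C_k)$-free with an induced $C_{k-1}$, so Lemma \ref{Ck-1} applies. Otherwise $G$ is $(C_k,C_{k-1})$-free, and since $C_l\subseteq_i G$ for some $l\in\{k-2,k-1,k\}$ the only remaining possibility is $l=k-2$, i.e.\ $C_{k-2}\subseteq_i G$. Here I split on $k$: for $k=6$ this says $C_4\subseteq_i G$ and $G$ is $(claw,P_6,C_6,C_5)$-free, which is exactly the hypothesis of Lemma \ref{C56-2}; for $k\in\{7,8\}$, $G$ is $(claw,P_k,C_k,C_{k-1})$-free with $C_{k-2}\subseteq_i G$, which is exactly the hypothesis of Lemma \ref{C78-2}. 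In every case the cited lemma yields a minimum dominating set in polynomial time.

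There is no genuine obstacle here: all the content sits in Lemmas \ref{Ck}, \ref{Ck-1}, \ref{C56-2}, \ref{C78-2}. The only points needing a little care are (i) ordering the case split so that the ``$C_k$-free'' and ``$C_{k-1}$-free'' side conditions required by Lemmas \ref{Ck-1}, \ref{C56-2}, \ref{C78-2} are actually available at the moment those lemmas are invoked, and (ii) separating the shortest-cycle case into $k=6$ versus $k\in\{7,8\}$, since $C_{k-2}$ is handled by two different lemmas there; combining the two gives the stated range $6\le k\le 8$.
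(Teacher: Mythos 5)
Your proposal is correct and matches the paper exactly: the paper derives this corollary "immediately" from Lemmas \ref{Ck}, \ref{Ck-1}, \ref{C56-2}, \ref{C78-2}, which is precisely the case analysis (ordered so that the $C_k$- and $C_{k-1}$-freeness hypotheses are available when needed) that you spell out. No further comment is needed.
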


\section{$G$ is $(claw,P_8)$-free}
Here we conclude by the main result proving that the Minimum Dominating Set problem is polynomial in the class of $(claw,P_8)$-free graphs. Starting from the result stating that the problem is polynomial when $G$ is $(claw,P_5)$-free, we successively prove that the problem is polynomial for $(claw,P_6)$-free, $(claw,P_7)$-free graphs. Then we conclude for the class of $(claw,P_8)$-free graphs.

In \cite{K14P5} D. Malyshev proved that the Minimum Dominating Set problem is polynomial for the class of $(K_{1,4},P_5)$-free graphs. Hence we obtain the following lemma.
\begin{lemma}\label{clawp5}
Let $G$ be a connected $(claw,P_5)$-free graph. Computing a minimum dominating set is polynomial-time solvable.
\end{lemma}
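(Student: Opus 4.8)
The statement to prove is Lemma~\ref{clawp5}, which is essentially a direct citation of Malyshev's result on $(K_{1,4},P_5)$-free graphs. Since every claw is $K_{1,3}$, and $K_{1,3}$ is an induced subgraph of $K_{1,4}$, every claw-free graph is $K_{1,4}$-free; hence the class of $(claw,P_5)$-free graphs is contained in the class of $(K_{1,4},P_5)$-free graphs. Therefore the plan is simply to invoke the containment of graph classes and apply the known polynomial-time algorithm.

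\begin{proof}
In \cite{K14P5} D. Malyshev proved that the Minimum Dominating Set problem can be solved in polynomial time on the class of $(K_{1,4},P_5)$-free graphs. Since $claw=K_{1,3}$ is an induced subgraph of $K_{1,4}$, every claw-free graph is $K_{1,4}$-free. Consequently, if $G$ is $(claw,P_5)$-free then $G$ is $(K_{1,4},P_5)$-free, and the algorithm of \cite{K14P5} computes a minimum dominating set of $G$ in polynomial time.
\end{proof}

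The only subtlety worth stating explicitly is the class inclusion $(claw,P_5)\text{-free}\subseteq (K_{1,4},P_5)\text{-free}$, which follows because forbidding an induced $K_{1,3}$ already forbids any induced subgraph containing $K_{1,3}$, in particular $K_{1,4}$. There is no real obstacle here: the content is entirely in the cited theorem, and the connectedness hypothesis in the statement is harmless since one may treat connected components independently (the domination number is additive over components). I would keep the proof to these two sentences and move on to the genuinely new lemmas for $(claw,P_6)$-, $(claw,P_7)$-, and $(claw,P_8)$-free graphs, which is where the work of the paper lies.
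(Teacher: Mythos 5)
Your proof is correct and is exactly the paper's approach: the paper states this lemma as an immediate consequence of Malyshev's result in \cite{K14P5}, relying on the same class inclusion $(claw,P_5)$-free $\subseteq (K_{1,4},P_5)$-free that you spell out. Nothing further is needed.
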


\begin{lemma}\label{clawp6}
Let $G$ be a connected $(claw,P_6)$-free graph. Computing a minimum dominating set is polynomial-time solvable.
\end{lemma}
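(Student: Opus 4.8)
The plan is to handle $(claw,P_6)$-free connected graphs by a case analysis on the length of the longest induced cycle, using the machinery already built in the previous sections. By Corollary~\ref{Ck,k-1,k-2} (with $k=6$), if $G$ contains an induced $C_4$, $C_5$ or $C_6$, we are done in polynomial time. By Lemma~\ref{clawp5}, if $G$ is $P_5$-free we are also done. So the interesting case is when $G$ is $(claw,P_6)$-free, has no induced $C_l$ for $l\ge 4$ (hence $G$ is chordal), and contains an induced $P_5$. Actually, since a $P_6$-free graph that is also hole-free is chordal, the remaining case is: $G$ is a connected claw-free chordal $(claw,P_6)$-free graph. But Farber's result (cited in the Previous results) gives that a minimum independent dominating set can be computed in linear time on chordal graphs, and by Allan--Laskar $\gamma(G)=i(G)$ on claw-free graphs; so on claw-free chordal graphs the Minimum Dominating Set problem is polynomial. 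That would close the argument. First I would state this reduction cleanly: either $G$ has a long induced cycle (use Corollary~\ref{Ck,k-1,k-2}), or it has no induced cycle of length $\ge 4$ at all, in which case $G$ is chordal and claw-free, so apply Farber plus Allan--Laskar.

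The one gap in the above sketch is whether "$(claw,P_6)$-free and no induced $C_l$ for $4\le l\le 6$" really forces chordality. A hole $C_l$ with $l\ge 7$ already contains an induced $P_6$ (take six consecutive vertices of the cycle, $v_1-\cdots-v_6$, which is induced since $l\ge 7$ means $v_1v_6\notin E$). So a $P_6$-free graph cannot contain $C_l$ for any $l\ge 7$ either; combined with the assumption that $C_4,C_5,C_6$ are also excluded, $G$ has no hole and is therefore chordal. This is the key observation, and it is short. So the actual structure of the proof is: if $C_l\subseteq_i G$ for some $4\le l\le 6$, invoke Corollary~\ref{Ck,k-1,k-2}; otherwise $G$ is $(C_4,C_5,C_6,\dots)$-free, i.e. chordal (since longer holes are excluded by $P_6$-freeness), hence a claw-free chordal graph, and we conclude by Farber's algorithm for independent domination on chordal graphs together with the identity $\gamma=i$ on claw-free graphs.

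I would write it in roughly four short steps: (1) If $G$ is $P_5$-free apply Lemma~\ref{clawp5} and stop. (2) Otherwise observe: since $G$ is $P_6$-free, $G$ contains no induced cycle of length $\ge 7$; so if $G$ contains any hole it must be a $C_4$, $C_5$ or $C_6$, and Corollary~\ref{Ck,k-1,k-2} finishes. (3) Otherwise $G$ is hole-free, i.e. chordal, and still claw-free. (4) On claw-free graphs $\gamma(G)=i(G)$ (Allan et al.), and on chordal graphs a minimum independent dominating set is computable in linear time (Farber); hence $\gamma(G)$ is computable in polynomial time.

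I do not expect a serious obstacle here — the lemma is essentially a routing argument that dispatches to results already established. The only point that requires a line of justification rather than a citation is the claim that a $P_6$-free graph has no induced cycle of length $7$ or more; everything else is an appeal to Lemma~\ref{clawp5}, Corollary~\ref{Ck,k-1,k-2}, Farber's theorem, and the Allan--Laskar identity. If one prefers to avoid relying on the chordal/independent-domination route, an alternative for step~(4) would be to give a direct structural analysis of connected claw-free chordal $P_6$-free graphs (they have very restricted structure — bounded diameter around a clique), but using Farber plus $\gamma=i$ is cleaner and is exactly what the "Previous results" paragraph was set up to enable.
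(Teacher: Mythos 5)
Your proof is correct and follows exactly the same route as the paper: dispatch to Corollary~\ref{Ck,k-1,k-2} when $G$ contains an induced $C_4$, $C_5$ or $C_6$, and otherwise observe that $P_6$-freeness rules out longer holes, so $G$ is a claw-free chordal graph and Farber's algorithm together with the Allan--Laskar identity $\gamma(G)=i(G)$ finishes. Your explicit justification that a $P_6$-free graph has no induced $C_l$ for $l\ge 7$ is a detail the paper leaves implicit, but the argument is otherwise identical.
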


\begin{proof}
It follows from  Corollary \ref{Ck,k-1,k-2}, that if $C_l \subseteq_i G,\ 4 \leq l \leq 6$, then computing a minimum dominating set is polynomial. When $G$ is $(claw, C_4,C_5,C_6,P_6)$-free then it is chordal. The Minimum Dominating Set problem is polynomial for $claw$-free chordal graphs.
\end{proof}

\begin{lemma}
Let $G$ be a connected $(claw,C_5,C_6,C_7,P_7)$-free graph. Computing a minimum dominating set is polynomial-time solvable.
\end{lemma}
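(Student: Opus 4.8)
The plan is to reduce, as in Lemma \ref{clawp6}, to a small number of well-understood situations. Since $G$ is $(C_5,C_6,C_7,P_7)$-free it contains no induced $C_\ell$ with $\ell\ge 5$: the cycles $C_5,C_6,C_7$ are forbidden outright, and any induced $C_\ell$ with $\ell\ge 8$ would contain an induced $P_7$. Hence the only induced cycles of $G$ are triangles and $4$-cycles. If $C_4\not\subseteq_i G$ then $G$ is chordal; being also claw-free, $\gamma(G)=i(G)$ \cite{IndDom} and a minimum independent dominating set of a chordal graph can be found in linear time \cite{IndDomChordal}, so we are done. Moreover, if $C_4\subseteq_i G$ but $G$ is $P_6$-free, then $G$ is $(claw,P_6,C_6,C_5)$-free with $C_4\subseteq_i G$ and Lemma \ref{C56-2} applies. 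So the only genuinely new case is: $C_4\subseteq_i G$ \emph{and} $P_6\subseteq_i G$, and this is what the rest of the proof must handle.

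For that case I would follow the pattern of Lemmas \ref{C56-2} and \ref{C78-2}. Fix an induced $C=v_1-v_2-v_3-v_4-v_1$. By Properties \ref{contract} and \ref{leaf} assume that all contractible vertices have been contracted and that $G$ has no leaf. Put $S=N(V(C))\setminus V(C)$ and $W=V\setminus N[V(C)]$. A first structural fact, proved exactly as in those lemmas, is that every $v\in S$ having a neighbor in $W$ satisfies $N_C(v)=\{v_i,v_{i+1}\}$ for some $i$: three neighbors in $C$, or two non-consecutive ones, would give two non-adjacent vertices of $C$ which together with $v$ and its $W$-neighbor form a claw. If $W=\emptyset$ we invoke Property \ref{V=N[T]}, and if $|W|$ is bounded by a constant we invoke Property \ref{V=N[T]+W}; so we may assume $|W|$ is arbitrarily large. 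The aim is then to show that $\gamma(G)$ equals a constant number of vertices of $V(C)$ plus one vertex per connected component of $G[W]$, and to exhibit such a set explicitly, namely a dominating set of the form $\{v_i,v_j\}\cup R$ where $R$ picks one suitable neighbor per component of $G[W]$; such a set can clearly be produced in polynomial time, and its optimality is checked by the kind of claw/long-path argument used in Lemma \ref{C78-2}.

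The main obstacle, and the place where this departs from Lemmas \ref{C56-2} and \ref{C78-2}, is that now $P_6$ is allowed, so the key fact of Lemma \ref{Ck-2W} — that $W$ is an independent set — is no longer available, and with it the clean ``pairwise disjoint closed neighborhoods in $W$'' counting. One therefore has to analyze the connected components of $G[W]$ and how they attach to $S$. Two ingredients should make this tractable: first, $P_7$-freeness confines $W$ to distance at most $3$ from $C$ — if $w\in W$ were at distance $\ge 4$ from $V(C)$, a shortest $w$–$V(C)$ path prolonged inside $C$, kept induced thanks to the trace fact above, would be an induced $P_7$; second, claw-freeness forces, for every vertex $x$, the set $N_W(x)$ to contain no three pairwise non-adjacent vertices. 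Combining these with the trace fact should pin down the components of $G[W]$ (in particular that each is dominated by a single vertex) and their neighborhoods in $S$ tightly enough to determine $\gamma(G)$ and justify the explicit optimal dominating set above. Carrying out this last case analysis cleanly is the hard part; everything else is routine given the properties and lemmas already established.
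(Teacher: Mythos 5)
Your opening reduction is sound: $(C_5,C_6,C_7,P_7)$-freeness does forbid all holes except $C_4$, so the $C_4$-free case is chordal and claw-free (hence polynomial via Farber and Allan--Laskar), and the $P_6$-free case is covered by Lemma \ref{clawp6}. But the remaining case -- which you correctly identify as the only genuinely new one -- is where essentially the entire content of the lemma lives, and your proposal does not prove it; it only states a plan and concedes that ``carrying out this last case analysis cleanly is the hard part.'' Concretely, what is missing is everything the paper's proof actually does: it anchors the analysis on an induced $P_6$ (not on a $C_4$), partitions $S$ into the classes $H_1,\dots,H_5$ and establishes which are anticomplete to which, shows $R_1=R_5=\emptyset$, proves that vertices of $R_i$ with neighbors in $W$ all have the same neighborhood in $S$ (so that Property \ref{contract} and the no-leaf assumption can be leveraged repeatedly), decomposes $W$ into $Z_{24}, Z_2, Z_3, Z_4, Y_3$, shows $Z_2=Z_4=\emptyset$ and that the surviving pieces are independent with pairwise disjoint neighborhoods, and proves that each connected component of $Z_3$ has a universal vertex to which the distance-three vertices of $Y_3$ attach. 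None of these steps is routine, and several of them (e.g.\ the universal-vertex property of components of $Z_3$, or the emptiness of $R_3$ when $Z_{24}\neq\emptyset$) are exactly the places where $P_7$-freeness and claw-freeness must be played against each other in a long case analysis.

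A second, more specific gap: your claimed form of the optimal set, ``$\{v_i,v_j\}\cup R$ with one neighbor per component of $G[W]$, optimality checked by a claw/long-path argument,'' is not how the argument can be closed. Since $W$ need not be independent and its vertices sit at distance $2$ or $3$ from the anchor, the paper does not compute $\gamma(G)$ by an exact counting bound as in Lemmas \ref{C56-2} and \ref{C78-2}; instead it argues that beyond the forced one-vertex-per-component choices at most a bounded number of extra vertices are needed, and then \emph{enumerates} the polynomially many candidates (a single common neighbor of the two endpoints, or all $O(n^2)$ pairs $s_1,s_6$) to find the true minimum. Your sketch neither bounds the number of extra vertices nor explains how the minimum among the ``$+1$/$+2$/$+3$'' alternatives is certified. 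As it stands the proposal is a correct reduction plus an honest to-do list, not a proof.
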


\begin{proof}
By Properties \ref{contract} and \ref{leaf}, we can assume that all contractibles vertices of $G$ are contracted and that $G$ has no leaves.
By Lemma \ref{clawp6} we can assume that $P_6\subseteq_i G$. Let $P=v_1-v_2-v_3-v_4-v_5-v_6$.

Let $W=V\setminus N[V(P)]$. It follows from Property \ref{V=N[T]} that if $W=\emptyset$ then computing a minimum dominating set is polynomial. From now on $W\ne\emptyset$. Let $S=\{v \in V\setminus V(P)$ such that $2\le\vert N_P(v) \vert \leq 4\}$, $S_i \subseteq S$ being the set of vertices $v$ such that $\vert N_P(v)\vert=i$.
Let $H_i=\{v \in S_2: N_P(v) = \{v_i,v_{i+1}\},1\le i\le 5\}$. Since $G$ is claw-free each $H_i$ is complete. If there is an edge $r_ir_{i+1}$ with $r_i\in H_i,r_{i+1}\in H_{i+1}$ then $P=v_1-\cdots-v_i-r_i-r_{i+1}-v_{i+2}-\cdots-v_6=P_7$, a contradiction.
If there is an edge $r_ir_{j}$ with $r_i\in H_i,r_{j}\in H_{j}$ and $j\ge i+3$ then $C_{p}\subseteq_i G$, $p \geq 5$. So $H_1$ is anticomplete to $H_2,H_4,H_5$, the component $H_2$ is anticomplete to $H_3,H_5$, and the component $H_3$ is anticomplete to $H_4$. \\

We define $R_i$ as the set of vertices of $H_i$ having a neighbor in $W$, $R_i=\{v\in H_i: N_W(v) \ne\emptyset\},1\le i\le 5$. Since $G$ is $P_7$-free $R_1=R_5=\emptyset$. \\

Let $r\in R_i$, $r'\in R_i$, $r\ne r'$, $i\in\{2,4\}$ be such that $r$, respectively $r'$, has a neighbor $w\in W$, respectively $w'\in W$.
We show that $N_S(r)=N_S(r')$.

By contradiction we assume that there exists $s\in S$ such that $rs\in E$, $r's\not\in E$. From above $s\not\in R_i\cup H_{i-1}\cup H_{i+1}$. Let $i=2$ (the case $i=4$ is symmetric). Recall that $H_2$ is anticomplete to $H_1,H_3,H_5$, thus $s\in H_4\cup S_3\cup S_4$. If $s\in H_4$ then $G[\{r,w,v_3,s\}]$ is a claw, a contradiction. Hence $s\in S_3\cup S_4$. When $N_P(s)=\{v_1,v_2,v_3\}$ then $G[\{r',v_3,v_4,s\}]$ is a claw, a contradiction. When $N_P(s)=\{v_2,v_3,v_4\}$ then $G[\{r',v_1,v_2,s\}]$ is a claw, a contradiction. When $N_P(s)=\{v_3,v_4,v_5\}$ or $N_P(s)=\{v_4,v_5,v_6\}$ then $G[\{r,v_2,w,s\}]$ is a claw, a contradiction. So $s\in S_4$. When $N_P(s)=\{v_1,v_2,v_3,v_4\}$ then $G[\{r,v_1,v_4,s\}]$ is a claw, a contradiction. When $N_P(s)=\{v_2,v_3,v_4,v_5\}$ then $G[\{r,'v_1,v_2,s\}]$ is a claw, a contradiction. When $N_P(s)=\{v_3,v_4,v_5,v_6\}$ then $G[\{r,v_4,v_6,s\}]$ is a claw, a contradiction. Now let $i=3$. Recall that $H_3$ is anticomplete to $H_2,H_4$, thus $s\in H_1\cup H_5\cup S_3\cup S_4$. If $s\in H_1$ (the case $s\in H_5$ is symmetric) then $G[\{r,w,v_3,s\}]$ is a claw, a contradiction.  Hence $s\in S_3\cup S_4$. If $N_P(s)=\{v_1,v_2,v_3\}$ (the case $N_P(s)=\{v_4,v_5,v_6\}$ is symmetric) then $G[\{r,w,v_4,s\}]$ is a claw, a contradiction. If $N_P(s)=\{v_2,v_3,v_4\}$ (the case $N_P(s)=\{v_3,v_4,v_5\}$ is symmetric) then $G[\{r',v_4,v_5,s\}]$ is a claw, a contradiction. So $s\in S_4$.  When $N_P(s)=\{v_1,v_2,v_4,v_5\}$ or $N_P(s)=\{v_1,v_2,v_5,v_6\}$ then $G[\{r,v_1,v_5,s\}]$ is a claw, a contradiction. When $N_P(s)=\{v_2,v_3,v_5,v_6\}$ then $G[\{r',v_3,v_4,s\}]$ is a claw, a contradiction.
When $N_P(s)=\{v_1,v_2,v_3,v_4\}$ (the case $N_P(s)=\{v_3,v_4,v_5,v_6\}$ is symmetric) then $G[\{r',v_4,v_5,s\}]$ is a claw, a contradiction. Hence $N_P(s)=\{v_2,v_3,v_4,v_5\}$ but $G[\{r,v_2,v_5,s\}]$ is a claw, a contradiction. Thus $N_S(r)=N_S(r')$.\\

Let $r_2\in R_2$, $r_2'\in R_2$, $r_2\ne r_2'$ be such that $r_2$, respectively $r_2'$, has a neighbor $w\in W$, respectively $w'\in W$.
Let $r_4\in R_4$, $r_4'\in R_4$, $r_4\ne r_4'$ be such that $r_4$, respectively $r_4'$, has $w$, respectively $w'$, as neighbor.
We show that $N_{S\setminus H_4}(r_2)=N_{S\setminus H_4}(r'_2)$, respectively $N_{S\setminus H_2}(r_4)=N_{S\setminus H_2}(r'_4)$.

By contradiction we assume that there exists $s\in S$ such that $r_2s\in E,r'_2s\not\in E$. From above $s\not\in H_1\cup H_2\cup H_3$. When $s\in H_4$ we know that $s$ is not a neighbor of $w$. If $s\in H_4\cup H_5$ then $G[\{r_2,v_2,w,s\}]$ is a claw, a contradiction.  Hence $s\in S_3\cup S_4$. When $N_P(s)=\{v_1,v_2,v_3\}$ then $G[\{r_2',v_3,v_4,s\}]$ is a claw, a contradiction. When $N_P(s)=\{v_2,v_3,v_4\}$ then $G[\{r_2',v_1,v_2,s\}]$ is a claw, a contradiction. When $N_P(s)=\{v_3,v_4,v_5\}$ then $G[\{r_2,v_2,w,s\}]$ is a claw, a contradiction. When $N_P(s)=\{v_4,v_5,v_6\}$ then $G[\{r_2,v_4,v_6,s\}]$ is a claw, a contradiction. So $s\in S_4$. When $N_P(s)=\{v_1,v_2,v_4,v_5\}$ or $N_P(s)=\{v_1,v_2,v_5,v_6\}$ then $G[\{r,v_1,v_5,s\}]$ is a claw, a contradiction. When $N_P(s)=\{v_2,v_3,v_5,v_6\}$ then $G[\{r',v_3,v_4,s\}]$ is a claw, a contradiction. When $N_P(s)=\{v_1,v_2,v_3,v_4\}$ then $G[\{r_2,v_1,v_4,s\}]$ is a claw, a contradiction. When $N_P(s)=\{v_2,v_3,v_4,v_5\}$ then $G[\{r'_2,v_1,v_2,s\}]$ is a claw, a contradiction. When $N_P(s)=\{v_3,v_4,v_5,v_6\}$ then $G[\{r_2,v_4,v_6,s\}]$ is a claw, a contradiction. Thus $N_{S\setminus H_4}(r_2)=N_{S\setminus H_4}(r'_2)$ and by symmetry, for $r'_4\in R_4,r_4'\ne r_4,$ we have  $N_{S\setminus H_2}(r_4)=N_{S\setminus H_2}(r'_4)$.\\

Let $w\in W$. We show that $w$ cannot have two neighbors $r_i,r_{i+1}$ with $r_i\in R_i$, $r_{i+1}\in R_{i+1}$. Suppose for contradiction that these two neighbors exist. Then $v_1-\cdots-v_i-r_i-w-r_{i+1}-v_{i+2}-\cdots-v_6=P_8$, a contradiction. Now, since $R_1=R_5=\emptyset$, if $w$ has two neighbors $r_i\in R_i$, $r_j\in R_j$, $i\ne j,$ these two neighbors are $r_2 \in R_2$, $r_4 \in R_4$ and $r_2r_4\in E$, else $w-r_4-v_4-v_3-r_2-w=C_5$. Moreover, when $w$ has two neighbors $r_2\in R_2$, $r_4\in R_4,$ then for each neighbor $w'\in N_W(w)$, $w'$ has $r_2$ and $r_4$ as neighbors. Assume for contradiction that $w$ has a neighbor $w'\in W$ such that $w'r_2\not \in E$ (by symmetry $w'r_4\not \in E$ is the same case). Then $w'-w-r_2-v_3-\cdots-v_6=P_7$, a contradiction. It follows that $N[w]=N[w']$, a contradiction.

Hence setting $Z_{24}=\{w\in W: w \textrm{ has two neighbors } r_2\in R_2, r_4\in R_4\}$, $Z_{24}$ is an independent set.\\

Let $w,w'\in Z_{24}, w\ne w'$. Since $G$ is claw-free we have $N(w) \cap N(w')=\emptyset$.
We show that $N_{R_2}(w)$ is anticomplete to $N_{R_4}(w')$ and $N_{R_4}(w)$ is anticomplete to $N_{R_2}(w')$. By contradiction if $w$ has a neighbor $r_2\in R_2$, $w'$ has a neighbor $r_4\in R_4$, and $r_2r_4\in E$ then $G[\{v_2,r_2,w,r_4\}]$ is a claw, a contradiction. \\

Let $Z_i=\{w\in W: w \textrm{ has a neighbor in } R_i\setminus(N_{R_i}(Z_{24})\},\ 2\le i\le 4\}$.

We show that $Z_2,Z_3,Z_4$ are pairwise anticomplete. If there is an edge $w_2w_4,w_2\in Z_2$, $w_4\in Z_4,$ with $r_2'\in R_2$, $r_4'\in R_4$ the neighbors of $w_2,w_4$ respectively, then $w_2-r_2'-v_3-v_4-r_4'-w_4-w_2=C_6$ ($r_2'r_4'\not \in E$ else $G[\{v_2,r_2',w_2,r_4'\}]$ is a claw). If there is an edge $w_2w_3$, $w_2\in Z_2$, $w_3\in Z_3,$ with $r_2'\in R_2$, $r_3'\in R_3$ the neighbors of $w_2,w_3$ respectively, then $w_2-r_2'-v_3-r_3'-w_3-w_2=C_5$ (recall $r_2'r_3' \not\in E$). By symmetry there is no edge between $Z_3$ and $Z_4$.\\

Let $Y=W\setminus(Z_2\cup Z_3\cup Z_4\cup Z_{24})$. One can observe that for every $w\in Y$ we have $N_{Z_2}(w) = N_{Z_4}(w) = N_{Z_{24}}(w)= \emptyset$ else $P_7\subseteq_i G$.

Let $Y_3=\{w\in Y: w \textrm{ has a neighbor in } Z_3\}$. If there exists $w'\in Y\setminus Y_3$ such that $w'$ has a neighbor $w,w\in Y_3$, then $P_7\subseteq_i G$. Hence $Y=Y_3$. \\

We show that we can assume that $Z_2,Z_4,Y_3$ are three independent sets. The arguments are the same for the three sets, so we show that $Z_2$ is an independent set. For contradiction, we assume that there are $w_1,w_2\in Z_2$ such that $w_1w_2\in E$.
We prove that $N_{R_2}(w_1) = N_{R_2}(w_2)$. If $N_{R_2}(w_1) \ne N_{R_2}(w_2)$ then there exists $r_2\in R_2$ which is a neighbor of $w_1$ but not a neighbor of $w_2$. Then $w_2-w_1-r_2-v_3-\cdots-v_6=P_7$, a contradiction. If $N_{Z_2}(w_1) \ne N_{Z_2}(w_2)$ then there exists $w_3\in Z_2$ such that $w_2w_3\in E$, $w_1w_3\not\in E,$ but $G[\{v_2,r_2,w_1,w_3\}]$ is a claw, a contradiction. Hence $N[w_1]=N[w_2]$, a contradiction. Hence $Z_2,Z_4,Y_3$ are three independent sets. \\

Since $G$ is claw-free then for every two distinct vertices $w_1,w_2 \in Z_2\cup Z_4\cup Y_3$ we have $N(w_1) \cap N(w_2)=\emptyset$.

We prove that for every $w\in Y_3$, $N(w)$ is a clique. Let $w\in Z_3$. Suppose there are $s,s'$ two non adjacent vertices in $N(w)$. Since $G$ is claw-free $s,s'$ cannot have a common neighbor in $R_3$. Let $r\in R_3$ be a neighbor of $s$. Then $s'-w-s-r-v_3-v_2-v_1=P_7$, a contradiction. \\

Since $G$ is claw-free, if there are a vertex $r \in R_i$ with a neighbor $z \in Z_i$ and a vertex $s \in S$ such as $sz \not\in E$ and $v_i \not\in N(s)$ then $G$ contains a claw, a contradiction, (note that $v_{i+1} \not\in N(s)$ is symmetric). Hence $N(Z_i)$ is anticomplete to $H_j$, $j \neq i$. \\

We show that we can assume that $Z_2=Z_4=\emptyset$. The arguments are the same in the two cases, so we consider $Z_2$.
Let $r,r'\in R_2$ be two neighbors of $w\in Z_2$. We show that $N[r]=N[r']$. Since $N_R(w)=N_{R_2}(w)$ and $rr'\in E$  then, as proved above, $N_S(r)= N_S(r')$. For two distinct $w_1,w_2 \in Z_2$, $N(w_1) \cap N(w_2)=\emptyset$. Hence, $N[r]=N[r']$, a contradiction. Then $w$ is a leaf, a contradiction. \\

Now we study the structure of $Z_3$. For every distinct two vertices $w_1,w_2\in Z_3$ such that $w_1w_2\in E$, there cannot exist two distinct vertices $w_1',w_2'\in Z_3$ such that $w_1w_1'\in E$, $w_1'w_2\not\in E$ and $w_2w_2'\in E$, $w_1w_2'\not\in E$. For contradiction we suppose that such two vertices exist. We assume first that $w_2$ has a neighbor $r_2\in R_3$ such that $r_2w_1\not\in E$. If $w_1'r_2\not\in E$ then $v_1-v_2-v_3-r_2-w_2-w_1-w_1'=P_7$ else $G[\{v_4,r_2,w_2,w_1'\}]$ is a claw, a contradiction. So $w_1,w_2$ have a common neighbor $r_1\in R_3$. If $w_1'r_1\in E$ then $G[\{v_3,r_1,w_2,w_1'\}]$ is a claw, a contradiction. Thus $w_1'r_1\not\in E$ and $w_1'$ has a neighbor $r_1'\in R_3,r_1'\ne r_1$. If $r_1'w_2\in E$ then $G[\{v_3,r_1',w_2,w_1'\}]$ is a claw, a contradiction. So $r_1'w_2\not\in E$. If $r_1'w_1\not\in E$ then $v_1-v_2-v_3-r_1'-w_1'-w_1-w_2=P_7$, a contradiction. Thus $r_1'w_1\in E$. If $r_1'w_2'\not\in E$ then $v_1-v_2-v_3-r_1'-w_1-w_2-w_2'=P_7$, a contradiction. So $r_1'w_2'\in E$ but $G[\{v_4,r_1',w_1,w_2'\}]$ is a claw, a contradiction.

As a consequence each connected component $A_i$ of $Z_3$ has a universal vertex. Also, $G$ being claw-free two distinct connected components cannot share a neighbor in $R_3$. Moreover, by Property \ref{leaf} we have assumed that each $w_3\in Z_3$ is not a leaf. \\

We show that $w\in Y_3$ is connected to a universal vertex of a connected component $A_i$ of $Z_3$. We assume that the neighbors of $w$ are not universal in $A_i$. Let $s\in A_i$ be a neighbor of $w$, let $u,u\ne s,$ be a universal vertex of $A_i$. Since $s$ is not universal there exists $v,v\in A_i$ such that $sv\not\in E$ and $uv\in E$. Since $N(w)$ is complete $wv\not\in E$.
Let $r\in R_3$ be a neighbor of $s$. Since $G$ is claw-free then $rv\not\in E$. Let $r',r'\in R_3,r'\ne r,$ be a neighbor of $v$. As just above $r's\not\in E$. If $r'u\not\in E$ then $v_1-v_2-v_3-r'-v-u-s=P_7$ else $v_1-v_2-v_3-r'-u-s-w=P_7$, a contradiction. \\

We are ready to show how to build a $\gamma$-set in polynomial time.

First, we treat the case where $Z_{24}\ne \emptyset$. Let $r_2\in R_2,\ r_4\in R_4$ be two neighbors of $w,\ w\in Z_{24}$.

We show that $R_3=\emptyset$. Assume that there exists $w'\in W$ with a neighbor $r_3\in R_3$. Since $w'$ is not a neighbor of $r_2$ or $r_4$ we have $w'-r_3-v_3-r_2-r_4-v_5-v_6=P_7$, a contradiction. So $R_3=\emptyset$ and since $Z_2=Z_4=\emptyset$ we have $W=Z_{24}$.

Recall that $W=Z_{24}$ is independent and that for every two distinct vertices $w', w'\in Z_{24}$ we have $N(w)\cap N(w')=\emptyset$.\\

The $\gamma$-set is build as follows:

By Property \ref{V=N[T]+W}, we can assume that $\vert W \vert\ge 2$. We take $r_2 \in R_2$ a neighbor of $w$ (recall that the neighbors of $w$ in $R_i,i\in\{2,4\},$ have the same neighborhood and that all vertices of $R_i$ have the same neighbors in $S\setminus H_4$), and for each other $w'\in Z_{24}$ we take one adjacent vertex $r'_4\in R_4$. These vertices dominate $Z_{24}\cup H_2\cup H_4\cup\{v_2,v_3,v_4,v_5\}$.
At least one more vertex is necessary to dominate $G$ since $v_1$ and $v_6$ are not dominated. Adding the three vertices $v_2,v_4,v_6$ we have a dominating set (not necessarily minimum). We check first if there exists $s$ a neighbor of both $v_1$ and $v_6$ that dominates the rest of the graph. If such vertex $s$ does not exist, checking for all the pairs $s_1,s_6$ where $s_i$ is a neighbor of $v_i,i\in \{1,6\}$, one can verify if there is a $\gamma$-set with only two more vertices (note that there are at most $O(n^2)$ of such pairs).\\

Now we deal with the case $Z_{24}=\emptyset$.

The $\gamma$-set is build as follows:
\begin{itemize}
\item $Y_3\ne\emptyset$. For each $w\in Y_3$ we take one universal vertex in the connected component $A_i$ of $Z_3$ connected to $w$. For each connected component $A_i$ of $Z_3$ that is not connected to a vertex of $Y_3$, we do as follows: if there exists $r_3\in R_3$ which is complete to $A_i$ (recall that such vertices have the same neighborhood)  then we take $r_3$, else we take one universal vertex of $A_i$. These vertices dominate $Y_3\cup Z_3$.
At least one more vertex is necessary to dominate $G$ since $v_1$ and $v_6$ are not dominated. Adding the three vertices $v_2,v_4,v_6$ we have a dominating set (not necessarily minimum). We check first if there exists $s$ a neighbor of both $v_1$ and $v_6$ that dominates the rest of the graph. If such vertex $s$ is not found, checking for all the pairs $s_1,s_6$ where $s_i$ is a neighbor of $v_i,i\in \{1,6\}$, one can verify if there is a $\gamma$-set with only two more vertices (note that there are at most $O(n^2)$ such pairs).

\item $Y_3 = \emptyset$. Thus $Z_3=W$. For every connected component $A_i$ of $Z_3$, if there exists $r_3\in R_3$ which is complete to $A_i$ (recall that such vertices have the same neighborhood) then we take $r_3$, else we take one universal vertex of $A_i$. These vertices dominate $Z_3$.
At least one more vertex is necessary to dominate $G$ since $v_1$ and $v_6$ are not dominated. Adding the three vertices $v_2,v_4,v_6$ we have a dominating set (not necessarily minimum). We check first if there exists $s$ a neighbor of both $v_1$ and $v_6$ that dominates the rest of the graph. If such vertex $s$ is not found, checking for all the pairs $s_1,s_6$ where $s_i$ is a neighbor of $v_i$, $i\in \{1,6\}$, one can verify if there is a $\gamma$-set with only two more vertices (note that there are at most $O(n^2)$ such pairs).
\end{itemize}
Clearly the construction of the $\gamma$-set is polynomial.
\end{proof}

\begin{coro}\label{clawp7}
The Minimum Dominating Set problem is polynomial for $(claw,P_7)$-free graphs.
\end{coro}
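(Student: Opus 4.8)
The plan is to obtain Corollary~\ref{clawp7} by combining the preceding (unlabelled) lemma on $(claw,C_5,C_6,C_7,P_7)$-free graphs with Corollary~\ref{Ck,k-1,k-2}, via a simple case distinction on whether $G$ contains a ``medium-length'' induced cycle. No new structural analysis should be needed; the work has already been done in the earlier lemmas.

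First I would reduce to the connected case. The domination number is additive over connected components, and a minimum dominating set of $G$ is the disjoint union of minimum dominating sets of its components, which can be listed in linear time; so it suffices to compute a $\gamma$-set of a connected $(claw,P_7)$-free graph $G$. The key observation is then that, since $G$ is $P_7$-free, it cannot contain an induced cycle $C_l$ with $l\ge 8$ (any seven consecutive vertices of such a cycle induce a $P_7$). Hence exactly one of two situations occurs: either $C_l\subseteq_i G$ for some $l\in\{5,6,7\}$, or $G$ contains no induced $C_5$, $C_6$, or $C_7$ and therefore is $(claw,C_5,C_6,C_7,P_7)$-free.

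In the first situation I apply Corollary~\ref{Ck,k-1,k-2} with $k=7$: the hypothesis $k-2\le l\le k$ reads $5\le l\le 7$, so a minimum dominating set of $G$ can be computed in polynomial time. In the second situation, $G$ is a connected $(claw,C_5,C_6,C_7,P_7)$-free graph, so the preceding lemma applies verbatim and again yields a $\gamma$-set in polynomial time. Deciding which situation we are in is itself polynomial, since testing for an induced copy of a fixed graph ($C_5$, $C_6$, or $C_7$) takes time $O(n^7)$ by brute force; alternatively one may simply run the algorithm of the preceding lemma and, if its structural hypotheses are violated, fall back to Corollary~\ref{Ck,k-1,k-2}. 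Combining the two branches gives a polynomial-time algorithm, completing the proof.

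There is no real obstacle here: the only point requiring care is to check that the case split is exhaustive, i.e. that $P_7$-freeness genuinely rules out every induced cycle of length at least $8$, so that ``no induced $C_5$, $C_6$, $C_7$'' is the same as ``$(claw,C_5,C_6,C_7,P_7)$-free'' for a $(claw,P_7)$-free graph. (Short cycles $C_3$ and $C_4$ are harmless, as the preceding lemma does not forbid them.)
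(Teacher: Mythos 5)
Your proposal is correct and is essentially the argument the paper leaves implicit: Corollary~\ref{clawp7} is stated without proof precisely because it follows by combining Corollary~\ref{Ck,k-1,k-2} (with $k=7$, covering the case $C_l\subseteq_i G$ for $5\le l\le 7$) with the preceding lemma on connected $(claw,C_5,C_6,C_7,P_7)$-free graphs, the case split being exhaustive since $P_7$-freeness excludes induced cycles of length at least~$8$. Your additional remarks on the reduction to connected components and on detecting the cycles in polynomial time are correct and unobjectionable.
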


\begin{lemma}\label{clawp8}
Let $G$ be a connected $(claw,C_6,C_7,C_8,P_8)$-free graph. If $C_5\subseteq_i G$ then computing a minimum dominating set is polynomial.
\end{lemma}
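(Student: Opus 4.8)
The plan is to follow the scheme of Lemmas~\ref{C56-2} and~\ref{C78-2}: preprocess $G$, fix the induced $C_5$, describe the vertices around it, and exhibit a $\gamma$-set made of $O(1)$ vertices chosen near $C$ together with one forced ``private'' dominator per connected component of $G[W]$. First I would apply Properties~\ref{contract} and~\ref{leaf} to assume that every contractible vertex of $G$ is contracted and that $G$ has no leaf; by Corollary~\ref{clawp7} I may also assume $P_7\subseteq_i G$, since otherwise $G$ is $(claw,P_7)$-free and we are already done. Fix an induced $C=v_1-v_2-v_3-v_4-v_5-v_1$, put $S=N(V(C))\setminus V(C)$ and $W=V\setminus N[V(C)]$. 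If $W=\emptyset$ then Property~\ref{V=N[T]} finishes the proof, and by Property~\ref{V=N[T]+W} I may assume $|W|\ge 2$.

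The first structural step concerns $S$, classified by $N_C(\cdot)$; because $\alpha(C_5)=2$ this set may have size up to $5$, so the analysis is richer than for longer cycles. The key observation is that as soon as $v\in S$ has a neighbour $u$ with $N_C(u)=\emptyset$ --- in particular, any neighbour in $W$ --- then $N_C(v)$ is a clique of $C_5$, i.e.\ a single vertex $v_i$ or an edge $v_iv_{i+1}$, for otherwise two non-adjacent vertices of $N_C(v)$ together with $u$ induce a claw centred at $v$; meanwhile vertices of $S$ with $|N_C(v)|\ge 3$ have all their neighbours inside $N[V(C)]$ and already dominate most of $C$. Next I would analyse $W$, in the style of Lemma~\ref{Ck-2W} and of the $(claw,P_7)$-free arguments above: using $P_8$-freeness (a vertex of $W$ with no neighbour in $N(V(C))$ starts a long induced path), the absence of true twins, claw-freeness and $(C_6,C_7,C_8)$-freeness, one shows that $W$ splits into a bounded number of classes according to which cliques $\{v_i\}$ or $\{v_i,v_{i+1}\}$ the relevant $S$-neighbours project onto, that these classes are pairwise anticomplete (each cross edge, or a vertex of $W$ with $S$-neighbours of two incompatible types, produces one of the forbidden $C_6,C_7,C_8,P_8$), and that each class is either independent or a disjoint union of components each having a universal vertex. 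In all cases, for distinct non-adjacent $w,w'\in W$ one gets $N(w)\cap N(w')=\emptyset$ (a common neighbour $x\in S$ would induce a claw with $w$, $w'$ and any vertex of $N_C(x)$), so exactly one dominator per component of $G[W]$ is forced, and these can moreover be chosen in $S$ so as to cover as much of $S$ as possible (a vertex of $S$ complete to the component when one exists, otherwise the universal vertex of the component).

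It then remains to dominate the residual part, which lies entirely inside $N[V(C)]$: the vertices of $C$ and the vertices of $S$ not yet covered. A short case analysis on $N_C(s)$ for such an $s$ --- including the ``large'' cases $|N_C(s)|\in\{3,4,5\}$, which occur only here and are each ruled out by a claw or a forbidden $C_6/C_7/C_8/P_8$ --- shows that this residual set is dominated by a constant number of vertices of $N[V(C)]$. As in the earlier lemmas I would first add a crude choice such as $\{v_1,v_3\}$ to obtain a dominating set, then try all $O(n^2)$ (or $O(n^3)$) choices of the constant-size add-on and keep a smallest resulting set; together with the forced private dominators this is a $\gamma$-set, clearly computable in polynomial time.

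The hard part is precisely this case analysis. Because $\alpha(C_5)=2$, vertices of $S$ adjacent to three, four or all five vertices of $C$ are permitted, and one must simultaneously rule out that they, or the various adjacencies among the $S$-types and between $S$ and $W$, create a forbidden $C_6,C_7,C_8$ or $P_8$, while keeping enough flexibility to force the decomposition of $W$ above. Carrying out this finite case distinction exhaustively --- in the spirit of, but longer than, the $(claw,P_7)$-free case --- is where essentially all of the work lies.
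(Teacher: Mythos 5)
Your outline follows the same route as the paper: fix the $C_5$, classify $S=N(V(C))\setminus V(C)$ by the projections $N_C(\cdot)$, show that the $S$-neighbours of $W$ project onto edges of $C$, decompose $W$ into a bounded number of pairwise anticomplete classes, force one dominator per ``piece'' of $W$, and finish with a constant-size add-on enumerated over $O(n^2)$ choices. But as written this is a plan, not a proof: you say yourself that the exhaustive case analysis ``is where essentially all of the work lies'' and you do not carry it out. The claims that the classes of $W$ are pairwise anticomplete, that each class is independent or splits into components with a universal vertex, and that the residual inside $N[V(C)]$ is dominated by a constant number of vertices are exactly the content of the paper's proof (showing $R_i\ne\emptyset$ forces $R_{i-1}=R_{i+1}=\emptyset$, the separation of $W$ into $Z_{13}$, $Z_1$, $Y$, the elimination of $Y$ or of $H_2\cup H_5$ in the various subcases, etc.), and none of it is established here.

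Two of your stated claims are moreover wrong or insufficient as stated. First, a vertex $v\in S$ with a neighbour in $W$ cannot project onto a single vertex $v_i$ of $C$: the set $\{v_i,v,v_{i-1},v_{i+1}\}$ would induce a claw centred at $v_i$, so $N_C(v)$ is always an edge of $C$; the whole $H_i/R_i$ machinery rests on this. Second, ``exactly one dominator per connected component of $G[W]$ is forced'' is not the right count: a component of $G[W]$ may contain a vertex $y\in Y$ (with no neighbour in $S$) attached to a clique of $Z$, and such a component needs a dominator inside $N[y]$ in addition to dominators for the rest of the component; the correct accounting is one vertex per $y\in Y$ plus one per component of the part of $Z$ not seen by $Y$, justified by the pairwise disjointness of the relevant closed neighbourhoods. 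Finally, minimality of your constructed set is not certified by enumerating only the constant-size add-on: you also need that the per-piece dominators are interchangeable, i.e., that all candidates dominating a given $w\in W$ have the same neighbourhood in $S$ (the paper's $N_S(r)=N_S(r')$ claims, which require their own lengthy case analysis). Without these ingredients the argument does not go through.
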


\begin{proof}
By Properties \ref{contract} and \ref{leaf}, we can assume that all contractibles vertices of $G$ are contracted and that $G$ has no leaves.
Let $C=v_1-v_2-v_3-v_4-v_5-v_1=C_5\subseteq_i G$. Let $W=V\setminus N[V(C)]$. It follows from  Property \ref{V=N[T]} that if $W=\emptyset$ then computing a minimum dominating set is polynomial. From now on $W\ne\emptyset$.

Let $S=\{v \in V\setminus V(C)$ : $2\le\vert N(v)\cap V(C)\vert \leq 5\}$, and $S_i \subseteq S$ being the set of vertices $v$ such that $\vert N(v)\cap V(C)\vert=i$. Let $H_i=\{v \in S_2: N(v) \cap V(C) = \{v_i,v_{i+1}\}\}$, $1\le i\le 5$ (for convenience $v_{5+1}$ stands for $v_1$). Since $G$ is claw-free, each $H_i$ is complete. Moreover, if there is an edge $r_ir_{i+1}$ with $r_i\in H_i$, $r_{i+1}\in H_{i+1}$ then $r_i-v_i-v_{i-1}-\cdots-v_{i+2}-r_{i+1}-r_i=C_6$, a contradiction. Hence $H_i$ is anticomplete to $H_{i+1}$. We define $R_i$ as the set of vertices of $H_i$ having a neighbor in $W$, $R_i=\{v\in H_i: N_W(v) \neq \emptyset\},\ 1\le i\le 5$, $R=R_1\cup\cdots\cup R_5$.\\

Since $W\neq \emptyset$, we assume that there exists $w_1\in W$ such that $w_1$ has a neighbor $r_1\in R_1$. Suppose that $R_2\ne \emptyset$. There exists $w_2\in W$ with a neighbor $r_2\in R_2$. If $w_1=w_2$ then $w_1-r_2-v_3-v_4-v_5-v_1-r_1-w_1=C_7$, a contradiction. So $w_1\ne w_2$. If $w_1w_2\in E$ then $w_1-w_2-r_2-v_3-v_4-v_5-v_1-r_1-w_1=C_8$ else $w_1-r_1-v_1-v_5-v_4-v_3-r_2-w_2-w_1=P_8$, a contradiction. So, if $R_i \neq \emptyset$ then $R_{i-1}=R_{i+1}=\emptyset$. Hence $R_2=R_5=\emptyset$. \\

Let $r\in R_i$, $r'\in R_i$, $r\ne r'$, $i\in\{1,3,4\}$ be such that $r$, respectively $r'$, has a neighbor $w\in W$, respectively $w'\in W$.
We show that $N_S(r)=N_S(r')$.

By contradiction we assume that there exists $s\in S$ such that $rs\in E$, $r's\not\in E$. From above $s\not\in H_i\cup H_{i-1}\cup H_{i+1}$.
Let $i=1$. If $s\in H_3\cup H_4$ then $G[\{r,v_1,w,s\}]$ is a claw, a contradiction. Thus $s\in S_3\cup S_4\cup S_5$. When $N_C(s)=\{v_1,v_2,v_3\}$ (the case $N_C(s)=\{v_1,v_2,v_5\}$ is symmetric) then $G[\{r',v_5,w,s\}]$ is a claw, a contradiction. When $N_C(s)=\{v_2,v_3,v_4\}$ (the case $N_C(s)=\{v_1,v_4,v_5\}$ is symmetric) then $G[\{r,v_1,w,s\}]$ is a claw, a contradiction. When $N_C(s)=\{v_3,v_4,v_5\}$ then $G[\{r,v_3,v_5,s\}]$ is a claw, a contradiction. So $s\in S_4\cup S_5$. When $N_C(s)=\{v_1,v_2,v_3,v_4\}$ (the case $N_C(s)=\{v_1,v_2,v_4,v_5\}$ is symmetric) then $G[\{r',v_1,v_5,s\}]$ is a claw, a contradiction. When $N_C(s)=\{v_1,v_3,v_4,v_5\}$ or $N_C(s)=\{v_2,v_3,v_4,v_5\}$ or $N_C(s)=\{v_1,v_2,v_3,v_5\}$ or $s\in S_5$ then $G[\{r,v_3,v_5,s\}]$ is a claw, a contradiction.  For $i=3$ and $i=4$ the arguments are the same. Thus $N_S(r)=N_S(r')$.\\

Let $r_1\in R_1$, $r_1'\in R_1$, $r_1\ne r_1'$ be such that $r_1$, respectively $r_1'$, has a neighbor $w\in W$, respectively $w'\in W$.
Let $r_3\in R_3$, $r_3'\in R_3$, $r_3\ne r_3'$ be such that $r_3$, respectively $r_3'$, has $w$, respectively $w'$, as neighbor.
We show that $N_{S\setminus H_3}(r_1)=N_{S\setminus H_3}(r'_1)$, respectively $N_{S\setminus H_1}(r_3)=N_{S\setminus H_1}(r'_3)$.

Let $i=1$. By contradiction we assume that there exists $s\in S\setminus R_3$ such that $r_1s\in E$, $r'_1s\not\in E$. From above $s\not\in H_1\cup H_{2}\cup H_{5}$. If $s\in H_4$ then $G[\{r_1,v_1,w,s\}]$ is a claw, a contradiction. So $s\in S_3\cup S_4\cup S_5$. When $N_C(s)=\{v_1,v_2,v_3\}$ (the case $N_C(s)=\{v_1,v_2,v_5\}$ is symmetric) then $G[\{r_1',v_1,v_5,s\}]$ is a claw, a contradiction. When $N_C(s)=\{v_2,v_3,v_4\}$ (the case $N_C(s)=\{v_1,v_4,v_5\}$ is symmetric) then $G[\{r_1,v_1,w,s\}]$ is a claw, a contradiction. When $N_C(s)=\{v_3,v_4,v_5\}$ then $G[\{r_1,v_3,v_5,s\}]$ is a claw, a contradiction. So $s\in S_4\cup S_5$. When $N_C(s)=\{v_1,v_2,v_3,v_4\}$ (the case $N_C(s)=\{v_1,v_2,v_4,v_5\}$ is symmetric) then $G[\{r'_1,v_1,v_5,s\}]$ is a claw, a contradiction. When $N_C(s)=\{v_1,v_3,v_4,v_5\}$ or $N_C(s)=\{v_2,v_3,v_4,v_5\}$ or $N_C(s)=\{v_1,v_2,v_3,v_5\}$ or $s\in S_5$ then $G[\{r_1,v_3,v_5,s\}]$ is a claw, a contradiction. By symmetry the arguments are the same for $i=3$. Hence $N_{S\setminus H_3}(r_1)=N_{S\setminus H_3}(r'_1)$ and $N_{S\setminus H_1}(r_3)=N_{S\setminus H_1}(r'_3)$. \\

We study the case where $w_1$ has a neighbor $r_i$, $r_i \in R_i$, $i \in \{3, 4\}$. Since both cases are symmetric, let $r_3$, $r_3 \in R_3,$ be a neighbor of $w_1$. If $r_1r_3 \not\in E$ then $w_1-r_1-v_1-v_5-v_4-r_3-w_1=C_6$, a contradiction. Hence $N_{R_1}(w_1)$ is complete to $N_{R_3}(w_1)$. Since $R_3 \neq \emptyset$, we have $R_4 = \emptyset$ and $N_R(w_1)\subseteq R_1 \cup R_3$. Hence, we define the following subsets of $W$:
\begin{itemize}
\item $Z = \{w \in W : N_R(w) \neq \emptyset\}$;
\item $Z_i = \{z \in Z : N_{R_i}(z) \neq \emptyset, N_{R_j}(z) = \emptyset, 1 \leq i \leq 5, i \neq j\}$;
\item $Z_{ij} = \{z \in Z : N_{R_i}(z) \neq \emptyset, N_{R_j}(z) \neq \emptyset, 1 \leq i<j \leq 5\}$;
\item $Y=W\setminus Z$.
\end{itemize}

First, we show that $Z_i$ is anticomplete to $Z_{ij}$, then we show that $Z_i$ consists of leaves (so is empty). We conclude that $Z_{ij} \neq \emptyset$ implies $Z=Z_{ij}$. We set $w_1\in Z_{13}$, and since all cases are symmetric, we focus on $Z_1 \neq \emptyset$. \\

Let $w_1' \in Z_1$ with a neighbor $r_1'$, $r_1' \in R_1$, $r_1' \neq r_1$. Note that $r_1'r_3 \not\in E$ else $G$ contains a claw. If $w_1w_1' \in E$, then $r_1'w_1 \in E$, else $w_1-w_1'-r_1'-v_1-v_5-v_4-r_3-w_1=C_7$, but $w_1-r_1'-v_1-v_5-v_4-r_3-w_1=C_6$, a contradiction. Hence, $w_1w_1'\not\in E$ (by symmetry, for every $w_3 \in Z_3$, $w_1w_3 \not\in E$). Thus $Z_1$ and $Z_3$ are anticomplete to $Z_{13}$. \\

Now, we show that the vertices of $Z_1$ are leaves. Assume that there exists $v \in N(w_1')$, $v\ne r_1'$ such that $N[v] \neq N[w_1']$. If $v \in Z_3$ then $v-w_1'-r_1'-v_1-v_5-v_4-r_3-w_1=P_8$, a contradiction. If $v \in Y$ then either $v-w_1'-r_1'-v_1-v_5-v_4-r_3-w_1=P_8$ or $v-w_1'-r_1'-v_1-v_5-v_4-r_3-w_1-v=C_8$, a contradiction. If $v \in Z_1$ and $r_1'v \not\in E$ then $v-w_1'-r_1'-v_1-v_5-v_4-r_3-w_1=P_8$, a contradiction. Hence $N_{R_1}(w_1')=N_{R_1}(v)$. Since $N[v] \neq N[w_1']$ we can assume that there exists $v' \in W$ such that $vv' \in E$ but $v'w_1' \not\in E$. Yet with the same arguments as before we have $N_{R_1}(v)=N_{R_1}(v')$ and since $w_1'v' \not\in E$ then $G[\{r_1', v', v_1, w_1'\}]$ is a claw, a contradiction. Thus $Z_1$ consists of leaves, a contradiction. Thus $Z_1 = \emptyset$ and by symmetry $Z_3 = \emptyset$. So $Z=Z_{13}$. \\

We show that every pair $v,v' \in Z_{13}$ with $vv'\in E$ satisfy $N_{R_1 \cup R_3}(v)=N_{R_1 \cup R_3}(v')$. Let $w_1' \in Z_{13}$ be a neighbor of $w_1$. Suppose that there exists $r_1' \in N_{R_1}(w_1')$ such that $r_1'w_1 \not\in E$. If $r_1'r_3 \in E$ then $G[\{r_1', r_3, v_3, w_1\}]$ is a claw, a contradiction. If $r_3w_1' \not\in E$ then $w_1-r_3-v_4-v_5-v_1-r_1'-w_1'-w_1=C_7$, else $w_1'-r_1'-v_1-v_5-v_4-r_3-w_1'=C_6$, a contradiction. Hence, $N_{R_1}(w_1)=N_{R_1}(w_1')$ and by symmetry $N_{R_3}(w_1)=N_{R_3}(w_1')$. \\

Suppose that $Y \neq \emptyset$. Let $y \in Y$ be a neighbor of $w_1$. We show that $Z_{13}$ is a clique. Let $w_1' \in Z_{13}$ such that $w_1w_1' \not\in E$. We have $N_{R_1 \cup R_3}(w_1) \cap N_{R_1 \cup R_3}(w_1') = \emptyset$ else $G$ contains a claw. Yet, there exists $r_1'$ a neighbor of $w_1'$ in $R_1$ such that either $y-w_1-r_3-v_4-v_5-v_1-r_1'-w_1'=P_8$ or $y-w_1-r_3-v_4-v_5-v_1-r_1'-w_1'-y=C_8$, a contradiction, (note that $r_1'r_3 \not\in E$ else $G$ contains a claw). Hence $Z_{13}$ is a clique. \\

We show that the vertices of $Y$ are leaves. Suppose that $y$ has a neighbor $y' \in Y$. If $y'w_1\not\in E$ then $y'-y-w_1-r_1-v_1-v_5-v_4-v_3=P_8$, a contradiction. Hence $N_Z(y)=N_Z(y')$. Since we assume that $N[y] \neq N[y']$, there exists $v,v\in Y,$ such that $vy\in E$, $vy'\not\in E$. From above $w_1v\in E$ but $G[\{r_1,w_1,y',v\}]$ is a claw, a contradiction.
Thus, $Y$ is an independent set. Now, $N(y)\subseteq Z_{13}$ is a clique. Since for every two vertices $w_1$, $w_1' \in Z_{13}$ we have $N_{R_1 \cup R_3}(w_1)=N_{R_1 \cup R_3}(w_1')$ we can assume that $N(y)$ can be contracted into an unique vertex. Thus, $Y$consists of leaves, a contradiction. Hence $Y=\emptyset$. \\

As shown before, every two neighbors of $Z_{13}$ have the same neighbors in $R$, so they can be contracted and we can assume that $Z_{13}$ is an independent set. Moreover, since $G$ is $claw$-free, for every two distinct $z, z' \in Z_{13}$, $N[z] \cap N[z'] = \emptyset$. Also, recall that the neighbors of each $z, z \in Z_{13}$ induce a clique.\\

We show how to build a $\gamma$-set of $G$. Recall that $W=Z_{13}$. By Property \ref{V=N[T]+W} we can assume that $\vert W\vert\ge 2$. So there are $w_1$, $w_1'\in Z_{13}$ with neighbors $r_1$, $r_1'\in R_1$ and $r_3$, $r_3'\in R_3$, respectively. Let $q = \vert Z_{13} \vert$. Clearly, to dominate $Z_{13}$ we must take $q$ vertices. We take $r_1$ and $r_3'$. Recall that the vertices of $R_1$ and $R_3$ have the same neighbors in $S \cup V(C)$. Then, we take the $q-2$ vertices of $w\in Z_{13}$, $w\ne w_1,w_1'$. These $q$ vertices dominate $\{v_1, v_2, v_3, v_4\} \cup H_1 \cup H_3 \cup Z_{13}$.
It remains to dominate some vertices of $H_2 \cup H_4 \cup H_5 \cup S_3 \cup S_4 \cup \{v_5\}$. If there exists a vertex $v,v\in S \cup \{v_5\},$ which is universal to these non dominated vertices we take $v$, else we take the vertices $\{v_2, v_5\}$. \\

Now, we assume that $Z_{ij} = \emptyset$. Hence let $w_1 \in Z_1$. We study the case $R_3\ne \emptyset$. Recall that $R_2=R_4=R_5=\emptyset$. Let $w_3\in W$ such that $w_3$ has a neighbor $r_3\in R_3$. If $w_1w_3\in E$ then $w_1-w_3-r_3-v_4-v_5-v_1-r_1-w_1=C_7$ ($r_1r_3\not\in E$ else $G$ contains a claw), a contradiction. So $Z_1$ is anticomplete to $Z_3$. We assume that $w_1$ has a neighbor $v \in Y$. If $vw_3\in E$ then $v-w_3-r_3-v_4-v_5-v_1-r_1-w_1-v=C_8$ else $v-w_1-r_1-v_1-v_5-v_4-r_3-w_3=P_8$, a contradiction. Hence every neighbor $w_1'$, $w_1'\in W,$ of $w_1$ is in $Z_1$.  If $w_1'r_1\not\in E$ then $w_1'-w_1-r_1-v_1-v_5-v_4-r_3-w_3=P_8$, a contradiction. Hence $N_R(w_1) = N_R(w_1')$. Since $G$ is claw-free, for every $r \in R$, $N_W(r)$ is a clique, thus $N[w_1]=N[w_1']$, a contradiction. So $Z_1$ is an independent set. Now, recall that for every pair of vertices $r, r' \in R_i$, $1 \leq i \leq 5$, $N_S(r)=N_S(r')$. Hence, when $r, r'\in R_1$ have a common neighbor in $Z_1$, we have $N[r]=N[r']$, a contradiction. Hence $Z_1$ consists of leaves, a contradiction. Also, by symmetry, $W=Z_1\cup Z_3=\emptyset$, a contradiction. \\

Now we focus on $R_3=R_4=\emptyset$ (note that $Z=Z_1$).

We study the case where $H_2\ne\emptyset$ or $H_5\ne\emptyset$. Let $v\in H_2$ (the case $v\in H_5$ is symmetric). We have $Y = \emptyset$, else there are $y \in Y$, $z \in Z_1$, $r \in R_1$ such that $v-v_3-v_4-v_5-v_1-r-z-y=P_8$ (recall that $vr \not\in E$). So $W=Z=Z_1$. Let $w_1,w_2\in W$. We assume that $w_1w_2\in E$. Recall that $N[w_1]\ne N[w_2]$. Let $w_1r_1$, $w_2r_2\in E$, $r_1\ne r_2,$ such that $w_1r_2\not\in E$. We have $v-v_3-v_4-v_5-v_1-r_2-w_2-w_1=P_8$, a contradiction. Recall that for every $r \in R$, $N_W(r)$ is a clique, thus $N[w_1] = N[w_2]$, a contradiction. Hence $W$ is an independent set. Moreover, for every $w \in W$ and $r,r' \in N(w)$ we know that $r$ and $r'$ share the same neighbors in $V(C) \cup S$. Hence $W$ is composed exclusively of leaves, so $W = \emptyset$, a contradiction. \\

Now we can assume that $H_2=H_5=\emptyset$. Let $Z_A \subset Z$, $Z_A=\{w\in W : N_Y(w)=\emptyset\}$. We show that each connected component $A_i$ of $G[Z_A]$ contains a universal vertex relatively to $A_i$. For contradiction we suppose that there exists $A_i$, $A_i \subseteq Z_A$ with no universal vertex in it. Assume that $z_1-z_2-z_3-z_4=P_4\subseteq_i A_i$. Let $r, r\in R_1,$ be a neighbor of $z_1$ (note that there is a $P_5$ from $v_3$ to $r$).

Since $G$ is claw-free $rz_3,rz_4\not\in E$. If $rz_2\in E$ then there is a $P_8$ from $v_3$ to $z_4$ else there is a $P_8$ from $v_3$ to $z_3$, a contradiction. Now, we assume that $z_1-z_2-z_3-z_4-z_1=C_4\subseteq_i A_i$. Let $r$, $r\in R_1,$ be a neighbor of $z_1$. Since $G$ is claw-free we have $rz_3\not\in E$. If $rz_2 \in E$ then $rz_4 \not\in E$ else $G$ contains a claw, but $v_3-v_4-v_5-v_1-r-z_2-z_3-z_4=P_8$, a contradiction. If $rz_2 \not\in E$ then $v_3-v_4-v_5-v_1-r-z_1-z_2-z_3=P_8$, a contradiction. So $A_i$ is $(C_4,P_4)$-free. It follows that there are $z_1-z_2-z_3=P_3\subseteq_i A_i$ and $z_4\in A_i$ such that $z_4z_1,z_4z_2,z_4z_3\not \in E$. Also there exists $z\in A_i$ such that $z_2-z-z_4$ and $zz_1,zz_3\in E$ but $A_i[\{z,z_1,z_3,z_4\}]$ is a claw, a contradiction. So each $A_i$ has a universal vertex. Clearly, for two distinct components $A_i,A_j$ we have $N_{R_1}(A_i)\cap N_{R_1}(A_j)=\emptyset$ else there is a claw. \\

Suppose that $Y \neq \emptyset$. We show that $Y$ is an independent set. Suppose that there are $y,y'\in Y$ with $yy'\in E$. Recall that $N[y]\ne N[y']$. If $N_{Z_1}(y)\ne N_{Z_1}(y')$ then, w.l.o.g, $yw_1\in E$, $y'w_1\not\in E$, but $v_3-v_4-v_5-v_1-r_1-w_1-y-y'=P_8$, a contradiction. So $N_{Z_1}(y)= N_{Z_1}(y')$. There is no vertex $y'' \in Y$ such that $yy'' \in E$, $y'y'' \not\in E$, else $G$ contains a claw.
Hence $Y$ is an independent set and for every pair of vertices $y,y'\in Y$ we have $N(y)\cap N(y')=\emptyset$.

We show that for every $y\in Y$ its neighborhood $N(y)$ is a clique. For contradiction we assume that $y$ has two neighbors $z_1,z_2 \in Z$, $z_1z_2\not\in E$. Since $G$ is claw-free $z_1$ and $z_2$ cannot have a common neighbor in $R_1$. Let $r$, $r\in R_1,$ be a neighbor of $z_1$. Then $v_3-v_4-v_5-v_1-r-z_1-y-z_2=P_8$, a contradiction. Hence, $Y$ is an independent set, for each $y, y \in Y$, $N(y)$ is a clique. So we suppose $\vert N(y) \vert \geq 2$, else $y$ is a leaf.\\

We show that we can assume that each connected component $A_i$ of $G[Z_A]$ is anticomplete to $N(Y)$. Since $Y$ has no leaves, let $y \in Y$ with two neighbors $z,z' \in
Z_1$ such that $N[z] \neq N[z']$. Suppose that there exists $u \in Z_A$ a neighbor of $z$. First, we assume that $N_R(z) \neq N_R(z')$. W.l.o.g. let $r,r' \in R_1$ be respectively the neighbors of $z, z'$ such that $r'z, rz' \not\in E$. If $uz'\not\in E$ then $ur'\not\in E$ else $G$ contains a claw, but then $u-z-z'-r'-v_1-v_5-v_4-v_3=P_8$, a contradiction. Hence $uz', r'u \in E$ but $y-z-u-r'-v_1-v_5-v_4-v_3=P_8$, a contradiction. So $N_R(z)= N_R(z')$. Second, we assume that $N_Z[z] \neq N_Z[z']$. W.l.o.g. $uz' \not\in E$. Let $r \in R_1$ a neighbor of both $z, z'$. Clearly $ru\not\in E$ else $G$ contains a claw, but $G[\{r, u, y, z\}]$ is a claw, a contradiction. So we can assume that each $A_i$ is anticomplete to $N(Y)$. \\

We construct a $\gamma$-set as follows:

Let $q=\vert Y\vert$ and $k$ be the number of connected components of $Z_A$. Clearly, $q$ vertices are necessary to dominate $Y$. So for each $y_i\in Y$ we will take one of its neighbor as follows. Let us denote $R_1(y_i)=N_{R_1}(N(y_i))$. If $y_i$ has a neighbor $z_i$ which is complete to $R_1(y_i)$ then we take $z_i$, else we take every arbitrary neighbor of $y_i$ (recall that in both cases these $y_i$ have the same neighbors in $Z$). These $q$ vertices dominate $Y \cup (Z \setminus Z_A)$ and some of the vertices in $R_1(Y)$.

Now $k$ vertices are necessary to dominate $Z_A$. For each component $A_i\subset Z_A$ we do as follows. If there exists $r\in R_1$ which is complete to $A_i$ we take $r$ into the $\gamma$-set (case $a$), else we take one universal vertex of $A_i$ (case $b$) (recall that in both cases these $r$ have the same neighbors in $S$).

These $k$ vertices dominate $Z_A\cup H_1\cup\{v_1,v_2\}$ if at least one vertex is chosen in the case $a$, else they dominate $Z_A$. \\

Case where at least one vertex is chosen with the case $a$: $v_3,v_4,v_5$ are not dominated with the $q+k$ already chosen vertices ($H_1$ is complete thus $r\in R_1$ dominates $H_1 \cup \{v_1, v_2\}$). So a dominating set of $G$ has size at least $q+k+1$. Adding the two vertices $v_3$ and $v_5$, we have a dominating set (not necessarily minimum). Checking if there exists a vertex $v \in V(C) \cup S$, that is universal to the remaining non-dominated vertices, can be done in polynomial-time. \\

Case where all the vertices are chosen with the case $b$: it remains to dominate $C$ and some vertices of $S_2 \cup S_3 \cup S_4$. So a dominating set of $G$ has a size at least $q+k+1$. Adding the three vertices $v_1, v_3, v_5$, we have a dominating set (not necessarily minimum). If there exists a vertex $v \in S_5$ that is universal to the remaining non-dominated vertices we take it. If no such vertex exists, checking for all the pairs $\{v, v'\}\subset N[V(C)]$, one can verify if there exists a $\gamma$-set with $q+k+2$ vertices (note that there are at most $O(n^2)$ of such pairs).
\end{proof}

\begin{lemma}\label{P8C}
Let $G$ be a connected $(claw,C_5,C_6,C_7,C_8,P_8)$-free graph. Computing a minimum dominating set is polynomial-time solvable.
\end{lemma}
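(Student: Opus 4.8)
The plan is to reduce to two situations already handled in the paper --- $(claw,P_7)$-free graphs and claw-free chordal graphs --- and then analyse the remaining case around a small induced subgraph. First I would invoke Corollary~\ref{clawp7}: if $G$ is $P_7$-free, a minimum dominating set is computable in polynomial time, so from now on assume $P_7\subseteq_i G$. Next, observe that $G$ has no induced hole of length at least $5$: it is $C_5,C_6,C_7,C_8$-free by hypothesis, and for $k\ge 9$ any $C_k$ already contains an induced $P_8$ on $8$ consecutive vertices, which is forbidden. Hence, if $G$ is moreover $C_4$-free, then $G$ is chordal; since $G$ is claw-free we have $\gamma(G)=i(G)$ (Allan et al.) and a minimum independent dominating set of a chordal graph is computable in linear time (Farber), so we are done. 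Therefore I may assume $C_4\subseteq_i G$, and, by Properties~\ref{contract} and~\ref{leaf}, that all contractible vertices of $G$ are contracted and that $G$ has no leaves.

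The core of the argument is then a structural analysis around a fixed induced $C=C_4=v_1-v_2-v_3-v_4-v_1$, following the template of Lemmas~\ref{Ck-2W}, \ref{C56-2} and~\ref{clawp8}, but now leaning on $P_8$-freeness together with the absence of the holes $C_5,C_6,C_7,C_8$. Write $S=N(V(C))\setminus V(C)$ and $W=V\setminus N[V(C)]$; if $|W|$ is bounded we conclude by Property~\ref{V=N[T]+W}, so suppose $W$ is large. Claw-freeness first gives that every $v\in S$ with a neighbour in $W$ has $N_C(v)=\{v_i,v_{i+1}\}$ for a consecutive pair (otherwise $v$ sees a diagonal pair $v_j,v_{j+2}$ of $C_4$ and a $W$-neighbour $w$, giving the claw $G[\{v,w,v_j,v_{j+2}\}]$), so these vertices split into cliques $H_i=\{v:N_C(v)=\{v_i,v_{i+1}\}\}$ with $H_i$ anticomplete to $H_{i+1}$ (an edge there closes an induced $C_5$). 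The next and longest step is to pin down how $W$ attaches to $S$ and what $G[W]$ looks like: I would show that a $W$-vertex reaching two ``incompatible'' parts of $S$, or an edge inside $W$ joining two vertices with differing neighbourhoods in $S$, forces a forbidden $P_8$ or one of $C_5,\dots,C_8$. The surviving configurations should be: (a) $W$ decomposes into classes each attaching to a single $H_i$, up to at most one bounded ``mixed'' class attaching to a fixed pair of $H_i$'s (in the style of $Z_{13}$ in Lemma~\ref{clawp8}); (b) each such class induces a disjoint union of graphs each having a universal vertex (as in Lemma~\ref{clawp8}); and (c) within a fixed $H_i$, all vertices having a $W$-neighbour have the same neighbourhood in $S\cup V(C)$, hence are interchangeable. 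It follows that $N[A]$ is dominated by a single vertex for each component $A$ of $G[W]$, and no such vertex dominates both of a remaining diagonal pair of $C$, which fixes $\gamma(G)$ up to an additive constant.

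The $\gamma$-set is then constructed exactly as in Lemmas~\ref{C56-2} and~\ref{clawp8}: take one suitable dominator per component of $G[W]$ (preferring a vertex of the relevant $H_i$ that is complete to that component, when one exists), add the diagonal pair $v_1,v_3$, which together dominate $V(C)\cup S=N[V(C)]$, and then --- since only a bounded number $t$ of vertices can remain undominated once the $W$-part and $N[V(C)]$ are covered --- test in $O(n^{t})$ time whether some single vertex of $N[V(C)]$, or some pair of such vertices, already dominates everything still undominated. Returning the smallest candidate obtained yields a minimum dominating set, and the whole procedure runs in polynomial time.

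The step I expect to be the obstacle is the ``longest step'' above. Because only $P_8$ (and not $P_6$, as in Lemma~\ref{C56-2}) is forbidden, $W$ can a priori carry longer induced paths and more varied attachments to $C$ than in the $(claw,P_6)$ setting, so one must genuinely exploit all four extra forbidden holes $C_5,C_6,C_7,C_8$ to eliminate these patterns. Organising the resulting case analysis --- splitting on $|N_C(v)|$ for $v\in S$, on which $H_i$ each $W$-vertex attaches to, and on the adjacencies among $W$-vertices and among their $S$-neighbours --- so that it stays finite and yields exactly the structural outcomes (a), (b), (c), is where essentially all the work lies; the final polynomial-time construction is routine by comparison.
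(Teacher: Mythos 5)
Your opening reductions are sound and match the paper's: Corollary~\ref{clawp7} lets you assume $P_7\subseteq_i G$, and the observation that $C_5,\dots,C_8$-freeness together with $P_8$-freeness kills every hole of length at least $5$, so that the $C_4$-free case is chordal and hence settled by Allan--Laskar and Farber, is correct. From that point on, however, your argument diverges from the paper's and, more importantly, stops being a proof. The paper anchors the entire analysis on the induced $P_7$ itself: it defines $W=V\setminus N[V(P_7)]$, the cliques $H_1,\dots,H_6$, the sets $R_i$, $Z_{24}$, $Z_{35}$, $Z_i$, $Y_3$, $Y_4$, and proves roughly a dozen structural claims about them before assembling the $\gamma$-set. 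The choice of a $7$-vertex anchor is what makes $P_8$-freeness bite: it yields $R_1=R_6=\emptyset$, forces each $W$-vertex to see at most two of the $R_i$, forces the components of $Z_3\cup Z_4$ to have universal vertices, and so on. You instead anchor on an induced $C_4$ and never use the $P_7$ again. With a $4$-vertex anchor the far side of $N[V(C)]$ is much less constrained (for instance $v_4-v_3-v_2-s-w_1-w_2-w_3$ is a permitted induced $P_7$, so $G[W]$ may contain vertices at distance $3$ from $C$), and your description of what survives --- items (a), (b), (c) --- is asserted rather than proved. You yourself flag that ``essentially all the work lies'' in that step; that is exactly right, and it means the proposal is a plan, not a proof.

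There are also concrete slips suggesting the plan has not been stress-tested. A vertex $s$ with $N_C(s)=\{v_2,v_4\}$ (a diagonal attachment, necessarily without a $W$-neighbour but otherwise unobstructed by claw-freeness) is dominated by neither $v_1$ nor $v_3$, so the claim that the diagonal pair $\{v_1,v_3\}$ dominates all of $N[V(C)]$ is false as stated. Likewise, the assertions that each component $A$ of $G[W]$ has $N[A]$ dominated by a single vertex and that only a bounded number of vertices remain undominated afterwards are precisely the statements whose proofs occupy most of the paper's argument; there they are only obtained after showing, e.g., that all vertices of $R_i$ with $W$-neighbours share the same neighbourhood in $S$, that $Z_2=Z_5=\emptyset$, and that $Y$-vertices attach to universal vertices of their components. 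Until you either carry out the corresponding case analysis for the $C_4$-anchored decomposition, or switch to the $P_7$ anchor where the forbidden $P_8$ and the four forbidden holes do the eliminating for you, the lemma is not established.
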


\begin{proof}
By Lemma \ref{clawp7} we can assume that $P_7\subseteq_i G$. Let $P=v_1-v_2-v_3-v_4-v_5-v_6-v_7$. By Properties \ref{contract} and \ref{leaf}, we can assume that all contractibles vertices of $G$ are contracted and that $G$ has no leaves.

Let $W=V\setminus N[V(P)]$. By Property \ref{V=N[T]} if $W=\emptyset$ then computing a minimum dominating set is polynomial. From now on $W\ne\emptyset$. Let $S=\{v \in V\setminus V(P)$ : $2\le\vert N(v)\cap P\vert \leq 4\}$, and $S_i \subseteq S$ being the set of vertices $v$ such that $\vert N(v)\cap V(P)\vert=i$. Let $H_i=\{v \in S_2: N(v) \cap V(P) = \{v_i,v_{i+1}\},1\le i\le 6\}$. Since $G$ is claw-free each $H_i$ is complete.
If there is an edge $r_ir_{i+1}$ with $r_i\in H_i$, $r_{i+1}\in H_{i+1}$ then $P=v_1-\cdots-v_i-r_i-r_{i+1}-v_{i+2}-\cdots-v_7=P_8$, a contradiction.
If there is an edge $r_ir_{j}$ with $r_i\in H_i$, $r_{j}\in H_{j}$ and $j\ge i+3$ then $C_{p}\subseteq_i G$, $p \geq 5$, a contradiction. So $H_1$ is anticomplete to $H_2,H_4,H_5,H_6$, and $H_2$ is anticomplete to $H_3,H_5,H_6$, and $H_3$ is anticomplete to $H_4,H_6$. \\

We define $R_i$ as the set of vertices of $H_i$ having a neighbor in $W$, that is, $R_i=\{v\in H_i: N(v)\cap W\ne\emptyset\}$, $1\le i\le 6$. Since $G$ is $P_8$-free $R_1=R_6=\emptyset$.\\

Let $w\in W$. We show that there cannot exist three indices $1\le i<j<k\le 6$ such that $w$ has three neighbors $r_i\in R_i$, $r_j\in R_j$, $r_k\in R_k$. Suppose for contradiction that these three neighbors of $w$ exist. Since $R_1=R_6=\emptyset$ then $2\le i<j<k\le 5$.
Since $G$ is claw-free and $H_p$ is anticomplete to $H_{p+1}$, these three indices cannot be successive. So w.l.o.g. we can assume that $i=2$, $j=4$, $k=5$. Now $H_2$ is anticomplete to $H_5$, but $v_3-r_2-w-r_5-v_5-v_4-v_3=C_6$, a contradiction. Hence for every $w\in W$ there is at most two neighbors $r_i,r_j$ such that $r_i\in R_i$, $r_j\in R_j$, $i\ne j$.

If $w$ has two neighbors $r_i\in R_i$, $r_j\in R_j$, $i<j$, then either $r_i\in R_2$, $r_j\in R_4$ or $r_i\in R_3$, $r_j\in R_5$ (recall that $H_i$ is anticomplete to $H_{i+1}$, $H_{p}$, $p \geq i + 3$ and $R_1 = R_6 = \emptyset$).

If $w$ has two neighbors $r_i\in R_2$, $r_j\in R_4$, respectively $r_i\in R_3$, $r_j\in R_5$, then $r_ir_j\in E$, else $w-r_j-v_4-v_3-r_i-w=C_5\subseteq_i G$, respectively $w-r_j-v_5-v_4-r_i-w=C_5\subseteq_i G$, a contradiction. \\

Let $Z_{24}=\{w\in W : N_{R_2}(w)\neq \emptyset,\ N_{R_4}(w)\neq \emptyset\}$ and $Z_{35}=\{w\in W : N_{R_3}(w)\neq \emptyset,\ N_{R_5}(w)\neq \emptyset\}$. We show that $Z_{24}$ is anticomplete to $Z_{35}$. For contradiction we suppose that there are $w_1\in Z_{24}$, $w_2\in Z_{35}$ with $w_1w_2\in E$. Let $r_1\in R_2$ be a neighbor of $w_1$ and $r_2\in R_5$ be a neighbor of $w_2$. Since $r_1r_2\not\in E$ we have $w_1-r_1-v_3-v_4-v_5-v_6-r_2-w_2-w_1=C_8$, a contradiction.

We show that we can assume that $Z_{24}$ and $Z_{35}$ are two independent sets. The two sets being symmetric we show that $Z_{24}$ is an independent set. For contradiction we assume that there are $w_1,w_2\in Z_{24}$ such that $w_1w_2\in E$. We prove that $N_{R_2}(w_1)=N_{R_2}(w_2)$. If $N_{R_2}(w_1) \ne N_{R_2}(w_2)$ then there exists $r_2\in R_2$ which is a neighbor of $w_1$ but not a neighbor of $w_2$. Then $w_2-w_1-r_2-v_3-\cdots-v_7=P_8$, a contradiction. We prove that $N_{R_4}(w_1) = N_{R_4}(w_2)$. If $N_{R_4}(w_1) \ne N_{R_4}(w_2)$ then there exists $r_4\in R_4$ which is a neighbor of $w_1$ but not a neighbor of $w_2$. There exists $r_2\in R_2$ a neighbor of $w_1$ and $w_2$. We know that $r_2r_4 \in E$. It follows that $G[\{v_2,r_2,r_4,w_2\}]$ is a claw, a contradiction. Hence $N_{R_2}(w_1) = N_{R_2}(w_2)$ and $N_{R_4}(w_1) = N_{R_4}(w_2)$. By Property \ref{contract} there exists $s \not\in R_2 \cup R_4$ such that $s$ is a neighbor of $w_1$ but not a neighbor of $w_2$. Let $r_2 \in R_2$ a neighbor of $w_1$ and $w_2$. If $sr_2\not\in E$ then $s-w_1-r_2-v_3-\cdots-v_7=P_8$, a contradiction. When $sr_2\in E$ then $G[\{v_2,r_2,s,w_2\}]$ is a claw, a contradiction. Hence $Z_{24}$ is an independent and by symmetry $Z_{35}$ is also independent.
Moreover, since $G$ is claw-free for every two distinct $w,w'\in Z_{24}\cup Z_{35}$ we have $N(w) \cap N(w')=\emptyset$.

For every two distinct $w,w'\in Z_{24}$, respectively $w,w'\in Z_{35}$ we have that $N_{R_2}(w)$ is anticomplete to $N_{R_4}(w')$ and $N_{R_4}(w)$ is anticomplete to $N_{R_2}(w')$, respectively $N_{R_3}(w)$ is anticomplete to $N_{R_5}(w')$ and $N_{R_5}(w)$ is anticomplete to $N_{R_3}(w')$. For contradiction we assume that $w$ has a neighbor $r_2\in R_2$, $w'$ has a neighbor $r_4\in R_4$, and $r_2r_4\in E$. Then $G[\{v_2,r_2,w,r_4\}]$ is a claw, a contradiction.\\

Let $Z_i=\{w\in W: N(w)\cap R_i\setminus(N_{R_i}(Z_{24}\cup Z_{35}) \neq \emptyset\}$, $2\le i\le 5$.
We show that $Z_2,Z_3,Z_4,Z_5$ are pairwise anticomplete. If there is an edge $w_2w_4$, $w_2\in Z_2$, $w_4\in Z_4,$ with $r_2'\in R_2$, $r_4'\in R_4$ the neighbors of $w_2,w_4$ respectively, then $w_2-r_2'-v_3-v_4-r_4'-w_4-w_2=C_6$ ($r_2'r_4'\not \in E$ else $G[\{v_2,r_2',w_2,r_4'\}]$ is a claw), a contradiction. By symmetry there is no edges between $Z_3,Z_5$. If there is an edge $w_2w_5,w_2\in Z_2$, $w_5\in Z_5$, with $r_2'\in R_2$, $r_5'\in R_5$ the neighbors of $w_2,w_5$ respectively, then $w_2-r_2'-v_3-v_4-v_5-r_5'-w_5-w_2=C_7$ (remember $r_2'r_5'\not \in E$), a contradiction. If there is an edge $w_4w_5,w_4\in Z_4$, $w_5\in Z_5,$ with $r_4'\in R_4$, $r_5'\in R_5$ the neighbors of $w_4,w_5$ respectively, then $w_4-r_4'-v_5-r_5'-w_5-w_4=C_5$ (recall $r_4'r_5' \not\in E$), a contradiction. By symmetry there is no edges between $Z_2,Z_3$. \\

Let $Y=W\setminus(Z_2\cup Z_3\cup Z_4\cup Z_5\cup Z_{24}\cup Z_{35})$. One can observe that for every $w\in Y$ we have $N_{Z_2}(w) = N_{Z_5}(w) = N_{Z_{24}}(w)=N_{Z_{35}}(w)= \emptyset$ else $P_8\subseteq_i G$. Now, if $w\in Y$ has two neighbors $w_3\in Z_3$, $w_4\in Z_4$ then $C_6\subseteq_i G$, a contradiction.
Hence $Y=Y_3\cup Y_4$ with $Y_3\cap Y_4=\emptyset$, $Y_3=\{w\in Y: N_{Z_3}(w)\neq \emptyset\}$, $Y_4=\{w\in Y: N_{Z_4}(w)\neq \emptyset\}$. Moreover $Y_3$ is anticomplete to $Y_4$ else $C_7\subseteq_i G$.\\

We show that we can assume that $Z_2,Z_5,Y_3,Y_4$ are four independent sets. The arguments are the same for the four sets, so we show that the statement is true for $Z_2$. For contradiction we assume that there are $w_1,w_2\in Z_2$ such that $w_1w_2\in E$.
We prove that $N_{R_2}(w_1) = N_{R_2}(w_2)$. If $N_{R_2}(w_1) \ne N_{R_2}(w_2)$ then there exists $r_2\in R_2$ which is a neighbor of $w_1$ but not a neighbor of $w_2$. Then $w_2-w_1-r_2-v_3-\cdots-v_7=P_8$, a contradiction. Since $N(w_1),N(w_2)\subseteq Z_2\cup R_2$ the result is obtained by Property \ref{contract}. Hence $Z_2,Z_5,Y_3,Y_4$ are four independent sets.

Since $G$ is claw-free then for every two distinct vertices $w_1,w_2 \in Z_2\cup Z_5\cup Y_3\cup Y_4$ we have $N(w_1)\cap N(w_2)=\emptyset$.

We prove that for every $w\in Y_3\cup Y_4$, $N(w)$ is a clique. The two cases being symmetric, let $w\in Y_4$. Suppose that there are $s,s'$ two non adjacent vertices in $N(w)$. Since $G$ is claw-free, $s,s'$ cannot have a common neighbor in $R_4$. Let $r\in R_4$ be a neighbor of $s$. Then $s'-w-s-r-v_4-v_3-v_2-v_1=P_8$, a contradiction. \\

Since $G$ is claw-free, if there is a vertex $r \in R_i$ with a neighbor $z \in Z_i$ and a vertex $s \in S$ such as $sz \not\in E$ and $v_i \not\in N(s)$ then $G$ contains a claw (note that $v_{i+1} \not\in N(s)$ is symmetric). Hence $N(Z_i)$ is anticomplete to $H_j$, $j \neq i$. \\

We show that we can assume that $Z_2=Z_5=\emptyset$. The arguments are the same in the two cases, so we consider $Z_2$.
Let $r,r'\in R_2$ be two neighbors of $w\in Z_2$. By previous arguments, $N(w)$ is complete to $H_2$ but anticomplete to $H_1, H_3, H_4, H_5, H_6$. Hence, it remains the case where $N_{S_3 \cup S_4}(r) \neq N_{S_3 \cup S_4}(r')$. Suppose that $N_{S_3 \cup S_4}(r) \neq N_{S_3 \cup S_4}(r')$. Let $s \in S_3\cup S_4$ such as $rs \in E$ but $r's \not\in E$. If $\{v_2,v_3\}\not\subset N_P(s)$ then $G[\{r,s,v_2,w\}]$ or $G[\{r,s,v_3,w\}]$ is a claw, a contradiction. So $\{v_2,v_3\} \subset N_P(s)$. If $v_1 \not\in N_P(s)$, respectively $v_4 \not\in N_P(s)$, then $G[\{r',s,v_1,v_2\}]$, respectively $G[\{r',s,v_3,v_4\}]$, is a claw, a contradiction. Hence $N_P(s) = \{v_1,v_2,v_3,v_4\}$ but $G[\{r,s,v_1,v_4\}]$ is a claw, a contradiction. Hence, $N[r]=N[r']$, a contradiction. Then $w$ is a leaf , a contradiction.\\

Now we study the structures of $Z_3$ and $Z_4$. The two cases being symmetric we deal with $Z_4$. For every distinct vertices $w_1,w_2\in Z_4$ such that $w_1w_2\in E$, then there cannot exist two distinct vertices $w_1',w_2'\in Z_4$ such that $w_1'w_1\in E,w'_1w_2\not\in E$ and $w_2'w_2\in E$, $w'_2w_1\not\in E$. For contradiction we suppose that such two vertices exist. First, we suppose that $w_1, w_2$ have two distinct neighbors $r_1,r_2\in R_4$, respectively. If $w_1'r_2\not\in E$ then $v_1-v_2-v_3-v_4-r_2-w_2-w_1-w_1'=P_8$, a contradiction. If $w_1'r_2\in E$ then $G[\{v_4,r_2,w_2,w_1'\}]$ is a claw, a contradiction. Second, w.l.o.g., $r_1\in R_4$ is a common neighbor of $w_1,w_2$ and $r_2\in R_4$ is a neighbor of $w_2$ but not $w_1$. If $w_1'r_2\not\in E$ then $v_1-v_2-v_3-v_4-r_2-w_2-w_1-w_1'=P_8$ else $G[\{v_4,r_2,w_2,w_1'\}]$ is a claw, a contradiction. Finally, $r_1,r_2\in R_4$ are two common neighbors of $w_1,w_2$ ($r_1,r_2$ are not necessarily distinct). If, w.l.o.g., $w'_1r_1\in E$ then $G[\{v_4,r_1,w_2,w_1'\}]$ is a claw, a contradiction. In the case where $w'_1r_1,w_1'r_2\not\in E$ then $w_1'$ has a neighbor $r'_1\in R_4$, $r'_1\ne r_1,r_2$. If $r'_1w_2\in E$ then $G[\{v_4,r_1',w_2,w_1'\}]$ is a claw, a contradiction. So $r'_1w_2\not\in E$. If $r'_1w_1\not\in E$ then $v_1-v_2-v_3-v_4-r_1'-w_1'-w_1-w_2=P_8$, a contradiction. Thus $r'_1w_1\in E$. If $r'_1w_2'\not\in E$ then $v_1-v_2-v_3-v_4-r_1'-w_1-w_2-w_2'=P_8$, a contradiction. So $r'_1w_2'\in E$ but $G[\{v_4,r_1',w_1,w_2'\}]$ is a claw, a contradiction.

As a consequence each connected component $A_i$ of $Z_3 \cup Z_4$ has a universal vertex. Also, $G$ being claw-free two distinct components cannot share a neighbor in $R_3 \cup R_4$. Moreover each $w_4\in Z_3 \cup Z_4 $ is not a leaf. \\

We show that $w\in Y_3 \cup Y_4$ is connected to a universal vertex of a connected component $A_i$ of $Z_3 \cup Z_4$. The two cases being symmetric, we deal with $Z_4$. Let $w \in Z_4$. We assume that the neighbors of $w$ are not universal in $A_i$. Let $s\in A_i$ be a neighbor of $w$, let $u$, $u\ne s$, be a universal vertex of $A_i$. Since $s$ is not universal there exists $v$, $v\in A_i$ such that $sv\not\in E$ and $uv\in E$. Since $N(w)$ is complete $wv\not\in E$.
Let $r\in R_4$ be a neighbor of $s$. Since $G$ is claw-free then $rv\not\in E$. Let $r'$, $r'\in R_4$, $r'\ne r,$ be a neighbor of $v$. As just above $r's\not\in E$. If $r'u\not\in E$ then $v_1-v_2-v_3-v_4-r'-v-u-s=P_8$ else $v_1-v_2-v_3-v_4-r'-u-s-w=P_8$, a contradiction. \\

We are ready to show how to build a $\gamma$-set in polynomial time. \\

First, we treat the case where $Z_{24}\ne \emptyset$ (the case $Z_{35}\ne \emptyset$ is the same). Let $r_2\in R_2$,$ r_4\in R_4$ be the two neighbors of $w$, $w\in Z_{24}$. Recall that $N(Z_{24})\subseteq R_2\cup R_4$.

We show that $R_3=\emptyset$. Assume that there exists $w'\in W$ with a neighbor $r_3\in H_3$ (thus $R_3\ne\emptyset$). Note that $w'$ cannot be a neighbor of $r_2$ or $r_4$. Then $w'-r_3-v_3-r_2-r_4-v_5-v_6-v_7=P_8$, a contradiction. An immediate consequence is that $Z_{35}=\emptyset$.
There is no vertex $w', w'\in W,$ with $r_2$ as a neighbor else $G[\{v_2,r_2,r_4,w'\}]$ is a claw. By symmetry, there is no vertex $w', w'\in W,$ with $r_4$ as a neighbor.\\

Let $r_2\in R_2$, $r_2'\in R_2$, $r_2\ne r_2'$ be such that $r_2$, respectively $r_2'$, has a neighbor $w\in Z_{24}$, respectively $w'\in Z_{24}$.
Let $r_4\in R_4$, $r_4'\in R_4$, $r_4\ne r_4'$ be such that $r_4$, respectively $r_4'$, has $w$, respectively $w'$, as neighbor.
We show that $N_{S\setminus H_4}(r_2)=N_{S\setminus H_4}(r'_2)$, respectively $N_{S\setminus H_2}(r_4)=N_{S\setminus H_2}(r'_4)$.

Let $i=2$ (the case $i=4$ is symmetric). By contradiction, we assume that there exists $s\in S\setminus H_4$ such that $r_2s\in E$, $r'_2s\not\in E$. From above $s\not\in S_2$. So $s\in S_3\cup S_4$. If $N_P(s)=\{v_1,v_2,v_3\}$ then $G[\{r'_2,v_3,v_4,s\}]$ is a claw, a contradiction.
When $N_P(s)=\{v_2,v_3,v_4\}$ then $G[\{r'_2,v_1,v_2,s\}]$ is a claw, a contradiction. When $N_P(s)=\{v_j,v_{j+1},v_{j+2}\},j\ge 3,$ then $G[\{r_2,w,v_2,s\}]$ is a claw, a contradiction. Thus $s\in S_4$. When $N_P(s)=\{v_1,v_2,v_3,v_4\}$ then $G[\{r_2,v_1,v_4,s\}]$ is a claw, a contradiction. When $N_P(s)=\{v_1,v_2,v_j,v_{j+1}\},j\ge 4,$ then $G[\{r_2,v_1,v_j,s\}]$ is a claw, a contradiction. When $N_P(s)=\{v_2,v_3,v_j,v_{j+1}\}$,$j\ge 4,$ then $G[\{r_2',v_1,v_2,s\}]$ is a claw, a contradiction. When $N_P(s)=\{v_j,v_{j+1},v_{j'},v_{j'+1}\},j\ge 3,j'\ge j+2$ then $G[\{r_2,w,v_2,s\}]$ is a claw, a contradiction. Hence $N_{S\setminus H_4}(r_2)=N_{S\setminus H_4}(r'_2)$ and $N_{S\setminus H_2}(r_4)=N_{S\setminus H_2}(r'_4)$.  \\

Let $r_4\in R_4$, $r_4'\in R_4$, $r_4\ne r_4'$ be such that $r_4$, respectively $r_4'$, has a neighbor $w\in Z_4$, respectively $w'\in Z_4$.
We show that $N_S(r_4)=N_S(r'_4)$.

By contradiction, we assume that there exists $s\in S$ such that $r_4s\in E$, $r'_4s\not\in E$. From above $s\not\in H_1\cup H_3\cup H_4 \cup H_5$. So $s\in H_2\cup H_6\cup S_3\cup S_4$. If $s\in H_2$ or $s\in H_6$ then $G[\{v_4,w,r_4,s\}]$ is a claw, a contradiction. So $s\in S_3\cup S_4$. If  $N_P(s)=\{v_1,v_2,v_3\}$ then $G[\{v_1,v_3,r_4,s\}]$ is a claw, a contradiction. If  $N_P(s)=\{v_2,v_3,v_4\}$ then $G[\{v_5,w,r_4,s\}]$ is a claw, a contradiction.
If $N_P(s)=\{v_3,v_4,v_5\}$ then $G[\{v_5,v_6,r'_4,s\}]$ is a claw, a contradiction. If $N_P(s)=\{v_4,v_5,v_6\}$ then $G[\{v_3,v_4,r'_4,s\}]$ is a claw, a contradiction.
When $N_P(s)=\{v_5,v_6,v_7\}$ then $G[\{v_4,w,r_4,s\}]$ is a claw, a contradiction. Thus $s\in S_4$. When $N_P(s)=\{v_1,v_2,v_3,v_4\}$ then $G[\{r_4,v_1,v_3,s\}]$ is a claw, a contradiction. When $N_P(s)=\{v_2,v_3,v_4,v_5\}$ then $G[\{r_4',v_5,v_6,s\}]$ is a claw, a contradiction. When $N_P(s)=\{v_3,v_4,v_5,v_6\}$ then $G[\{r_4,v_3,v_6,s\}]$ is a claw, a contradiction. When $N_P(s)=\{v_4,v_5,v_6,v_7\}$ then $G[\{r'_4,v_3,v_4,s\}]$ is a claw, a contradiction.
When $N_P(s)=\{v_1,v_2,v_6,v_7\}$ or $N_P(s)=\{v_2,v_3,v_6,v_7\}$ then $G[\{r_4,v_2,v_6,s\}]$ is a claw, a contradiction. When $N_P(s)=\{v_3,v_4,v_6,v_7\}$ then $G[\{r_4,v_3,v_6,s\}]$ is a claw, a contradiction. When $N_P(s)=\{v_1,v_2,v_5,v_6\}$ or $N_P(s)=\{v_2,v_3,v_5,v_6\}$ then $G[\{r_4,v_2,v_6,s\}]$ is a claw, a contradiction. When $N_P(s)=\{v_1,v_2,v_4,v_5\}$ then $G[\{r'_4,v_5,v_6,s\}]$ is a claw, a contradiction. Hence $N_S(r_4)=N_S(r'_4)$. By symmetry, for $r_2\in R_2$, $r_2'\in R_2$, $r_2\ne r_2'$ such that $r_2$, respectively $r_2'$, has a neighbor $w\in Z_2$, respectively $w'\in Z_2$ we have $N_S(r_2)=N_S(r'_2)$.\\

The $\gamma$-set is build as follows:

\begin{itemize}
\item $\vert Z_{24}\vert\ge 2$. We take $r_2 \in R_2$ a neighbor of $w$, and for each other $w'\in Z_{24}$ we take one adjacent vertex $r'_4\in R_4$. For each $w'\in Y_4$ we take one universal vertex in the connected component $A_i$ of $Z_4$ connected to $w'$. For each connected component $A_i$ of $Z_4$ that is not connected with $Y_4$, we take one universal vertex of $A_i$. These vertices dominate $ Z_{24}\cup Y_4\cup Z_4\cup H_2\cup H_4\cup\{v_2,v_3,v_4,v_5\}$. Since $v_1,v_7$ have no common neighbor at least two more vertices are needed. Adding the three vertices $v_2,v_4,v_6$ we have a dominating set (not necessarily minimum). Checking for all the pairs $s_1,s_7$ where $s_i$ is a neighbor of $v_i$, $i\in \{1,7\}$, one can verify if there is a $\gamma$-set with only two more vertices (note that there are at most $O(n^2)$ such pairs).

\item $\vert Z_{24}\vert=1$. For each $w' \in Y_4$ we take one universal vertex in the connected component $A_i$ of $Z_4$ connected to $w'$. If there exists a vertex $r \in R_4$ complete to a component $A_i$ of $Z_4$ that is not connected to $Y_4$ then we take $r$. For each remaining component $A_i$ of $Z_4$ that is not connected to $Y_4$, we take one universal vertex of $A_i$. These vertices dominate $Y_4\cup Z_4$ (note that $H_2, H_4$ are not necessarily dominated). Since $v_1,v_7,w$ have no common neighbor at least three more vertices are needed. Adding the four vertices $v_2,v_4,v_6,w$ we have a dominating set (not necessarily minimum). Checking for all the pairs $s_1,s_7$ where $s_i$ is a neighbor of $v_i,i\in \{1,7\}$, if there is a dominating set by adding $s_1,s_7,r_4$ or $s_1,s_7,r_2$, one can verify if there is a $\gamma$-set with only three more vertices (note that there are at most $O(n^2)$ such pairs).
\end{itemize}

In the case of $Z_{24}=Z_{35}=\emptyset$, we build the $\gamma$-set as follows:
\begin{itemize}
\item $Y_3,Y_4\ne\emptyset$. For each $w \in Y_3 \cup Y_4$ we take one universal vertex in the connected component $A_i$ of $Z_3\cup Z_4$ connected to $w$. If there exists $r_4\in R_4$ which is complete to a component $A_i$ of $Z_4$ that is not connected to $Y_3 \cup Y_4$ then, we take $r_4$. We do the same for the component of $Z_3$ with no neighbors in $Y_3$. For each remaining connected component $A_i$ of $Z_3\cup Z_4$ that is not connected to $Y_3 \cup Y_4$, we take one universal vertex of $A_i$. These vertices dominate $Y_3\cup Z_3\cup Y_4\cup Z_4$ (note that $H_2, H_4$ are not necessarily dominated). Since $v_1,v_7$ have no common neighbor at least two more vertices are needed. Adding the three vertices $v_2,v_4,v_6$ we have a dominating set (not necessarily minimum). Checking for all the pairs $s_1,s_7$ where $s_i$ is a neighbor of $v_i,i\in \{1,7\}$, one can verify if there is a $\gamma$-set with only two more vertices (note that there are at most $O(n^2)$ such pairs).
\item $Y_3\ne\emptyset$, $Y_4=\emptyset$ or $Y_4 \neq\emptyset$, $Y_3 = \emptyset$. The two cases being symmetric, let $Y_4=\emptyset$.

\begin{itemize}
\item $Z_4\ne\emptyset$. For each $w \in Y_3$ we take one universal vertex in the connected component $A_i$ of $Z_3$ connected to $w$. If there exists $r_4\in R_4$ which is complete to $A_i$, a connected component of $Z_4$, then we take $r_4$. If there exists $r_3 \in R_3$ which is complete to a connected component $A_j$ of $Z_3$ with no neighbors in $Y_3$, then we take $r_3$. Now, we take one universal vertex for each other component $A_l$, $A_l \neq A_i, A_j,$ of $Z_3\cup Z_4$. These vertices dominate $Y_3\cup Z_3\cup Z_4$. Since $v_1,v_7$ have no common neighbor at least two more vertices are needed. Adding the three vertices $v_2,v_4,v_6$ we have a dominating set (not necessarily minimum). Checking for all the pairs $s_1,s_7$ where $s_i$ is a neighbor of $v_i,i\in \{1,7\}$, one can verify if there is a $\gamma$-set with only two more vertices (note that there are at most $O(n^2)$ such pairs).

\item $Z_4=\emptyset$. For each $w \in Y_3$ we take one universal vertex in the connected component $A_i$ of $Z_3$ connected to $w$. If there exists $r_3 \in R_3$ which is complete to a connected component $A_i$ of $Z_3$ with no neighbors in $Y_3$, then we take $r_3$. Now, we take one universal vertex for each other component $A_i$ of $Z_3$. Adding the vertices $v_2,v_4,v_6$ we have a dominating set (not necessarily minimum). Checking for all the pairs $s_1,s_7$ where $s_i$ is a neighbor of $v_i,i\in \{1,7\}$, one can verify if there is a $\gamma$-set with only two more vertices. \end{itemize}

\item $Y_3,Y_4=\emptyset$.

\begin{itemize}
\item $Z_3,Z_4\ne\emptyset$. If there exists $r_4\in R_4$, respectively $r_3\in R_3$, which is complete to $A_i$, a connected component of $Z_4$, respectively $Z_3$, then we take $r_4$, respectively $r_3$. For each remaining component of $Z_3\cup Z_4$ we take one universal vertex. Adding the vertices $v_2,v_4,v_6$ we have a dominating set (not necessarily minimum). Checking for all the pairs $s_1,s_7$ where $s_i$ is a neighbor of $v_i,i\in \{1,7\}$, one can verify if there is a $\gamma$-set with only two more vertices.
\item $Z_3\ne\emptyset$, $Z_4=\emptyset$ or $Z_4\ne\emptyset$, $Z_3=\emptyset$. Let $Z_3\ne\emptyset$. If there exists $r_3\in R_3$ which is complete to a connected component of $Z_3$, then we take $r_3$. We add one universal vertex for each remaining component of $Z_3$. Now, adding the vertices $v_2,v_4,v_6$ we have a dominating set (not necessarily minimum). Checking for all the pairs $s_1,s_7$ where $s_i$ is a neighbor of $v_i,i\in \{1,7\}$, one can verify if there is a $\gamma$-set with only two more vertices.
\item $Z_3=Z_4=\emptyset$. Then $V=N[V(C)]$ and by Property \ref{V=N[T]} computing a minimum dominating set is polynomial.
\end{itemize}
\end{itemize}
\end{proof}

From Lemmas \ref{Ck}, \ref{Ck-1}, \ref{C78-2}, \ref{clawp8}, \ref{P8C}, we obtain the main result of this paper.

\begin{theorem}
The Minimum Dominating Set problem is polynomial for $(claw,P_8)$-free graphs.
\end{theorem}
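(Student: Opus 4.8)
The plan is pure assembly: all the substantive work lies in the lemmas already proved, and what remains is to verify that they cover every $(claw,P_8)$-free graph. First I would reduce to the connected case, since a minimum dominating set of a graph is the disjoint union of minimum dominating sets of its connected components, of which there are at most $n$. So let $G$ be a connected $(claw,P_8)$-free graph.

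Next I would record the key observation that $G$ contains no induced $C_\ell$ with $\ell\ge 9$: any eight consecutive vertices of such a cycle induce a $P_8$, a contradiction. Hence the only holes $G$ may contain are $C_4,C_5,C_6,C_7,C_8$, and I would split according to the length of the longest hole. If $C_8\subseteq_i G$, Lemma~\ref{Ck} with $k=8$ gives the result. If $G$ is $C_8$-free but $C_7\subseteq_i G$, then $G$ is $(claw,P_8,C_8)$-free and Lemma~\ref{Ck-1} with $k=8$ applies. If $G$ is $(C_7,C_8)$-free but $C_6\subseteq_i G$, then $G$ is $(claw,P_8,C_8,C_7)$-free and Lemma~\ref{C78-2} with $k=8$ applies. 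If $G$ is $(C_6,C_7,C_8)$-free but $C_5\subseteq_i G$, then $G$ is $(claw,C_6,C_7,C_8,P_8)$-free and Lemma~\ref{clawp8} applies. Finally, if $G$ contains no $C_5$ either, then $G$ is $(claw,C_5,C_6,C_7,C_8,P_8)$-free and Lemma~\ref{P8C} applies. The cases are exhaustive: induced $C_3$ and $C_4$ impose no obstruction and are permitted in every case. In each case a minimum dominating set of $G$ is produced in polynomial time, which proves the theorem.

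There is no genuine obstacle in this final step; the difficulty of the paper is concentrated in Lemmas~\ref{clawp8} and~\ref{P8C} (and in Corollary~\ref{clawp7}, which supplies them with a long induced path once no long hole is present). Those arguments normalise $G$ via Properties~\ref{contract} and~\ref{leaf}, then dissect how the vertices of $W=V\setminus N[V(C)]$ (respectively $V\setminus N[V(P)]$) attach to the fixed small subgraph, and ultimately reduce each configuration to Property~\ref{V=N[T]} or Property~\ref{V=N[T]+W}. For the theorem itself I would therefore merely cite the five lemmas above and present the exhaustive case split; nothing further is required.
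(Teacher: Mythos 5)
Your proof is correct and follows essentially the same route as the paper, which likewise obtains the theorem by combining Lemmas \ref{Ck}, \ref{Ck-1}, \ref{C78-2}, \ref{clawp8} and \ref{P8C} via a case split on the longest induced cycle among $C_5,\dots,C_8$ (after reducing to connected components). Your explicit remark that no induced $C_\ell$ with $\ell\ge 9$ can occur, and the check that the lemma hypotheses are met in each branch, are exactly the assembly the paper leaves implicit.
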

\section{Conclusion}

We have shown that the Minimum Dominating Set problem is polynomial for $(claw,P_8)$-free graphs. We left open the following problem: is there a positive integer $k, k\ge 9,$ such that the Minimum Dominating Set problem is $NP$-complete for the class of $(claw,P_k)$-free graphs? If the the answer is positive, a challenge should be to show a dichotomy: find the minimum integer $k$ such that the Minimum Dominating Set problem is $NP$-complete for $(claw,P_k)$-free graphs and polynomial for $(claw,P_{k-1})$-free graphs.

\end{document}